\documentclass[a4paper, oneside]{amsart}

\usepackage[
paper=a4paper,
text={132mm,205mm},centering
]{geometry}  
\usepackage{amssymb}
\usepackage[all]{xy}
\usepackage{graphicx}
\usepackage{hyperref}
\usepackage{tikz-cd}

\usetikzlibrary{arrows,arrows.meta}

\iftrue
\makeatletter
\def\@settitle{%
  \vspace*{-0pt}
  \begin{flushleft}%
    \LARGE\bfseries
    \strut\@title\strut
  \end{flushleft}%
}

\def\@setabstracta{%
    \ifvoid\abstractbox
  \else
    \skip@17pt \advance\skip@-\lastskip
    \advance\skip@-\baselineskip \vskip\skip@
    \box\abstractbox
    \prevdepth\z@ 
    \vskip-28pt
  \fi
}
\renewenvironment{abstract}{%
  \ifx\maketitle\relax
    \ClassWarning{\@classname}{Abstract should precede
      \protect\maketitle\space in AMS document classes; reported}%
  \fi
  \global\setbox\abstractbox=\vtop \bgroup
    \normalfont\small
    \list{}{\labelwidth\z@
      \leftmargin0pc \rightmargin\leftmargin
      \listparindent\normalparindent \itemindent\z@
      \parsep\z@ \@plus\p@
      
    }%
    \item[\hskip\labelsep\bfseries\abstractname.]%
}{%
  \endlist\egroup
  \ifx\@setabstract\relax \@setabstracta \fi
}

\def\ps@headings{\ps@empty
  \def\@evenhead{%
    \setTrue{runhead}%
    \normalfont\scriptsize
    \rlap{\thepage}\hfill
    \def\thanks{\protect\thanks@warning}%
    \leftmark{}{}}%
  \def\@oddhead{%
    \setTrue{runhead}%
    \normalfont\scriptsize
    \def\thanks{\protect\thanks@warning}%
    \rightmark{}{}\hfill \llap{\thepage}}%
  \let\@mkboth\markboth
}\ps@headings

\def\section{\@startsection{section}{1}%
  \z@{-1.4\linespacing\@plus-.5\linespacing}{.8\linespacing}%
  {\normalfont\bfseries\Large}}
\def\subsection{\@startsection{subsection}{2}%
  \z@{-.8\linespacing\@plus-.3\linespacing}{.5\linespacing\@plus.2\linespacing}%
  {\normalfont\bfseries\large}}
\def\subsubsection{\@startsection{subsubsection}{3}%
  \z@{.7\linespacing\@plus.2\linespacing}{-1.5ex}%
  {\normalfont\bfseries}}
\def\@secnumfont{\bfseries}

\renewcommand\contentsnamefont{\bfseries}
\def\@starttoc#1#2{\begingroup
  \setTrue{#1}%
  \par\removelastskip\vskip\z@skip
  \@startsection{}\@M\z@{\linespacing\@plus\linespacing}%
    {.5\linespacing}{
      \contentsnamefont}{#2}%
  \ifx\contentsname#2%
  \else \addcontentsline{toc}{section}{#2}\fi
  \makeatletter
  \@input{\jobname.#1}%
  \if@filesw
    \@xp\newwrite\csname tf@#1\endcsname
    \immediate\@xp\openout\csname tf@#1\endcsname \jobname.#1\relax
  \fi
  \global\@nobreakfalse \endgroup
  \addvspace{32\p@\@plus14\p@}%
  \let\tableofcontents\relax
}
\def\contentsname{Contents}
\def\l@section{\@tocline{2}{.5ex}{0mm}{5pc}{}}
\def\l@subsection{\@tocline{2}{0pt}{2em}{5pc}{}}
\makeatother
\fi 

\theoremstyle{plain}
\newtheorem{theorem}{Theorem}[section]
\newtheorem{proposition}[theorem]{Proposition}
\newtheorem{corollary}[theorem]{Corollary}
\newtheorem{lemma}[theorem]{Lemma}

\theoremstyle{definition}
\newtheorem{definition}[theorem]{Definition}

\newtheorem{remark}[theorem]{Remark}

\makeatletter
\def\Nopagebreak{\@nobreaktrue\nopagebreak}
\makeatother

\def\Z{\mathbb{Z}}
\def\Q{\mathbb{Q}}

\def\p{\mathfrak{p}}
\def\A{\mathcal{A}}
\def\T{\mathcal{T}}
\def\M{\mathcal{M}}
\def\C{\mathcal{C}}
\def\H{\mathcal{H}}
\def\K{\mathcal{K}}
\def\hM{\widehat{M}}
\def\Aut{\operatorname{Aut}}
\def\Ker{\operatorname{Ker}}
\def\Coker{\operatorname{Coker}}
\def\Im{\operatorname{Im}}
\def\Tor{\operatorname{Tor}}
\def\rank{\operatorname{rank}}
\def\D{\mathsf{D}}
\def\L{\mathsf{L}}
\def\Sp{\operatorname{Sp}}
\def\id{\mathrm{id}}

\def\to{\mathchoice{\longrightarrow}{\rightarrow}{\rightarrow}{\rightarrow}}
\makeatletter
\newcommand{\shortxra}[2][]{\ext@arrow 0359\rightarrowfill@{#1}{#2}}
\def\longrightarrowfill@{\arrowfill@\relbar\relbar\longrightarrow}
\newcommand{\longxra}[2][]{\ext@arrow 0359\longrightarrowfill@{#1}{#2}}
\renewcommand{\xrightarrow}[2][]{\mathchoice{\longxra[#1]{#2}}%
  {\shortxra[#1]{#2}}{\shortxra[#1]{#2}}{\shortxra[#1]{#2}}}
\newcommand{\hooklongrightarrow}{\lhook\joinrel\longrightarrow}

\makeatother

\makeatletter
\def\Nopagebreak{\@nobreaktrue\nopagebreak}
\makeatother


\begin{document}

\title
[Homology cylinders and invariants related to lower central series]
{Homology cobordism group of homology cylinders and invariants related to lower central series}
\author{Minkyoung Song}
\address{Center for Geometry and Physics, Institute for Basic Science (IBS), Pohang 37673, Korea}
\email{mksong@ibs.re.kr}

\begin{abstract}
The homology cobordism group of homology cylinders is a generalization of both the mapping class group of surfaces and the string link concordance group. We consider extensions of Johnson homomorphisms of a mapping class group, Milnor invariants and Orr invariants of links to homology cylinders, all of which are related to free nilpotent groups. We establish a combined filtration via kernels of extended Johnson homomorphisms and extended Milnor invariants. We determine its image under the three invariants, and investigate relations among the invariants, and relations of the filtration to automorphism groups of free nilpotent groups and to graded free Lie algebras. We obtain the number of linearly independent invariants by examining the successive quotients of the filtration.
\end{abstract}

\maketitle

\setcounter{tocdepth}{2}

\section{Introduction}
Let $\Sigma_{g,n}$ be a compact, oriented surface of genus $g$ with $n$ boundary components. Roughly speaking, a \emph{homology cylinder} over $\Sigma_{g,n}$ is a homology cobordism between two copies of~$\Sigma_{g,n}$ endowed with two embeddings $i_+$ and $i_-$ of~$\Sigma_{g,n}$ (see Definition~\ref{def:homology cylinder}). 
Goussarov~\cite{Go} and Habiro~\cite{Habi} independently introduced homology cylinders as important model objects for their theory of finite type invariants of 3-manifolds which play the role of string links in the theory of finite type invariants of links. Garoufalidis and Levine introduced in~\cite{GL, L01} the homology cobordism group $\H_{g,n}$ of homology cylinders over $\Sigma_{g,n}$ (see Definition~\ref{def:homology cobordism} for the description of the group operation). We can regard $\H_{g,n}$ as an enlargement of the mapping class group  $\M_{g,n}$ of $\Sigma_{g,n}$ since there is a natural embedding $\M_{g,n} \to \H_{g,n}$ (see \cite[p.~247]{L01}, \cite[Proposition 2.3]{CFK}).
We can identify $\H_{g,n}$ with more familiar groups. Both $\H_{0,0}$ and $\H_{0,1}$ are isomorphic to the group of homology cobordism classes of integral homology 3-spheres. The group $\H_{0,2}$ is isomorphic to the concordance group of framed knots in homology 3-spheres. For $n\geq 3$, $\H_{0,n}$ is isomorphic to the concordance group of framed $(n-1)$-component string links in homology 3-balls, or equivalently, in homology cylinders over $D^2=\Sigma_{0,1}$. Similarly, $\H_{g,n}$ can be considered to be the concordance group of framed $(n-1)$-component string links in homology cylinders over $\Sigma_{g,1}$.

Hereafter, we assume that $\Sigma_{g,n}$ has nonempty boundary, that is,~$n>0$. 
Hence $F:= \pi_1(\Sigma_{g,n})$ and $H:=H_1(\Sigma_{g,n};\Z)$ are the free group and the free abelian group of rank~$2g+n-1$, respectively. 
We denote by $\D_k(H)$ the kernel of the bracket map $H\otimes \L_k(H)\to \L_{k+1}(H)$ where $\L_k(H)$ is the degree $k$ part of the free Lie algebra over $H$. For a group $G$, $G_k$ denotes the $k$th term of lower central series given by $G_1=G$, $G_{k+1} = [G, G_k]$.

To understand structures of $\H_{g,n}$, several invariants and filtrations are introduced. First, Garoufalidis and Levine extended Johnson homomorphisms and Johnson filtration to $\H_{g,1}$~\cite{GL,L01}. Based on them, Morita, Sakasai, Cochran-Harvey-Horn~\cite{Mo08, Sa, CHH} constructed their new invariants on $\H_{g,1}$, respectively. Cha-Friedl-Kim~\cite{CFK} introduced a torsion invariant on $\H_{g,n}$. The author naturally extended Garoufalidis-Levine's Johnson homomorphisms and the filtration to $\H_{g,n}$ in~\cite{So}. Also, she defined extended Milnor invariants, extended Milnor filtration, and Hirzebruch-type invariants.
In this paper, we newly defined extended Orr invariants, and address the three invariants related to lower central series:

\begin{itemize}
	\item extended Johnson homomorphisms $\tilde\eta_k\colon \H_{g,n}\to \Aut(F/F_k)$ (see Section~\ref{subsec:Johnson})
	\item extended Milnor invariants $\tilde\mu_k\colon \H_{g,n}\to (F/F_k)^{2g+n-1}$ (see Section~\ref{subsec:Milnor})
	\item extended Orr invariants $\tilde\theta_k\colon \H_{g,n}(k)\to H_3(F/F_k)$ (see Section~\ref{subsec:Orr}),
\end{itemize}
with the following filtrations of $\H_{g,n}$:
\begin{itemize}
	\item extended Johnson filtration $\H_{g,n}[k]:=\Ker \tilde\eta_k$ 
	\item extended Milnor filtration $\H_{g,n}(k):=\Ker \tilde\mu_k\cap\H^0_{g,n}(=\Ker\tilde\mu_k\textrm{ for }k\geq 2)$
	\item modified extended Johnson filtration $\H^0_{g,n}[k]:=\H_{g,n}[k]\cap\H^0_{g,n}$.

\end{itemize}
Here, $\H^0_{g,n}$ is the subgroup consisting of 0-framed homology cylinders (see Section~\ref{subsec:0-framed}). By modifying $\H_{g,n}[k]$ to $\H^0_{g,n}[k]$, and we have a filtration combined with $\H_{g,n}(k)$
$$\H\rhd \H^0[1] =\H(1)\rhd \H^0[2] \rhd \H(2)\rhd \cdots \rhd \H^0[k] \rhd \H(k)\rhd \H^0[k+1] \rhd \H(k+1)\rhd \cdots  .$$ 
We determine images and successive quotients of this filtration under the three invariants. 

\subsection{Extended Johnson homomorphims}
\emph{Johnson homomorphisms} $\eta_k\colon\M_{g,1}\to\Aut(F/F_k)$ and the \emph{Johnson filtration} (also called the \emph{relative weight filtration}) on the mapping class group have connections to the geometry of the moduli space of curves, 3-manifold topology, and number theory (e.g., see \cite{Hai95, GL98, Pu12, Ma13}).
However, many basic questions remain open. In particular, the image of the Johnson homomorphisms are unknown. Morita~\cite{Mo93} gave the induced injective homomorphisms $\frac{\M_{g,1}[k]}{\M_{g,1}[k+1]} \hookrightarrow \D_k(H)$, but the precise images are also unknown. Rationally, there are results for $k\leq 7$ in \cite{Jo, Hai, AN, Mo99, MSS}. On the other hand, Garoufalidis and Levine considered \emph{extended Johnson homomorphisms} $\tilde\eta_k\colon\H_{g,1}\to\Aut(F/F_k)$ and the filtration $\H_{g,1}[k]:=\Ker\tilde\eta_k$, and determine $\frac{\H_{g,1}[k]}{\H_{g,1}[k+1]}$ as the whole $\D_k(H)$ in \cite{GL}. The homomorphisms have been used to develop new invariants and to find conditions for the invariants to be (quasi-)additive in \cite{Mo08, Sa, CHH}. The author naturally extended $\tilde\eta_k$ and the filtration to $\H_{g,n}$ in~\cite{So}, and suggested a candidate of the image $\tilde\eta_k(\H_{g,n})$. Now we formulate the images and quotients of the combined filtration.
	
\begin{theorem}
	The image of the extended Johnson homomorphism $\tilde\eta_k\colon\H_{g,n}\to\Aut(F/F_k)$ is 
	\begin{align*}
			\Aut_*(F/F_k)&:=\bigg\{\phi\in \Aut(F/F_k)~\bigg|\begin{array}{ll}\textrm{there is a lift }\tilde\phi \in \Aut(F/F_{k+1}) \textrm{ such that } \\
			  \tilde\phi(x_i)=\alpha_i^{-1} x_i \alpha_i \textrm{ for some }\alpha_i \in F/F_k\textrm{ and }\tilde\phi \textrm{ fixes } [\partial_n]\end{array}\bigg\}
	\end{align*}
	 where $x_1,\ldots, x_{n-1} \in F$ are homotopic to each boundaries.
Moreover, 
	 \begin{align*}
	 \frac{\H_{g,n}[k]}{\H_{g,n}[k+1]} &\cong\frac{\H^0_{g,n}[k]}{\H^0_{g,n}[k+1]} \cong  \Ker\{\Aut_*(F/F_{k+1})\rightarrow\Aut_*(F/F_k)\},\\
	 \frac{\H_{g,n}(k)}{\H^0_{g,n}[k+1]}&\cong \Ker\{\Aut_*(F/F_{k+1})\rightarrow\Aut_*(F/F_k)\} \cap \{\phi\;|\;\phi(x_i)=x_i \}.
	 \end{align*}
\end{theorem}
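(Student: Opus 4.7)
The plan is to prove the three identities in order, relying on Stallings' theorem on invariance of nilpotent quotients under integral homology equivalences, together with a clasper-surgery realization procedure.

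For the image computation, the inclusion $\tilde\eta_k(\H_{g,n})\subseteq \Aut_*(F/F_k)$ follows directly from the construction of $\tilde\eta_k$ and Stallings' theorem: since a homology cylinder $(M,i_+,i_-)$ is a homology cobordism, both $(i_\pm)_*$ induce isomorphisms $F/F_j \xrightarrow{\sim}\pi_1(M)/\pi_1(M)_j$ for every $j$, so the level-$(k{+}1)$ invariant $\tilde\eta_{k+1}(M)$ already provides the required lift $\tilde\phi$. The conjugation property $\tilde\phi(x_i)=\alpha_i^{-1}x_i\alpha_i$ for $i<n$ reflects that each non-distinguished boundary component of $M$ is an annulus $S^1\times[0,1]$ identifying $i_+(\partial_i\Sigma_{g,n})$ with $i_-(\partial_i\Sigma_{g,n})$, so $(i_+)_*(x_i)$ and $(i_-)_*(x_i)$ differ only by a basepoint-change conjugation along that annulus. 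The fixing of $[\partial_n]$ is forced by the normalization of basepoint on the distinguished boundary component. For the reverse containment, given $\phi\in\Aut_*(F/F_k)$ with lift $\tilde\phi$, I construct a realizing homology cylinder by clasper/Y-surgery on $\Sigma_{g,n}\times[0,1]$, extending the realization of Garoufalidis--Levine~\cite{GL} from $\H_{g,1}$ to $\H_{g,n}$; at each level an elementary generator of the kernel of $\Aut_*(F/F_{k+1})\to\Aut_*(F/F_k)$ is realized by a single Y-graph surgery, and induction on $k$ gives the full image.

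For the successive quotient formulas, I use the short exact sequence
\[ 1\to K_k \to \Aut_*(F/F_{k+1}) \to \Aut_*(F/F_k) \to 1, \]
where $K_k$ denotes the indicated kernel. The restriction of $\tilde\eta_{k+1}$ to $\H_{g,n}[k]=\Ker\tilde\eta_k$ takes values in $K_k$ and descends to an injective homomorphism $\H_{g,n}[k]/\H_{g,n}[k+1]\hookrightarrow K_k$; surjectivity uses the realization above. To obtain the isomorphism $\H^0_{g,n}[k]/\H^0_{g,n}[k+1]\cong K_k$, I verify that any realizing homology cylinder can be adjusted by boundary connect-sum with a suitable 0-framed piece so as to achieve trivial framing without altering $\tilde\eta_{k+1}$. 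The description of $\H_{g,n}(k)/\H^0_{g,n}[k+1]$ then follows by intersecting with $\Ker\tilde\mu_k$: the extended Milnor invariant records precisely the images $(i_-)_*(x_i)$ relative to chosen meridians, so $\tilde\mu_k(M)=0$ is equivalent to the stronger condition $\tilde\eta_k(M)(x_i)=x_i$ (trivial conjugator), yielding the extra constraint $\{\phi\mid\phi(x_i)=x_i\}$.

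The main obstacle will be the realization/surjectivity step. While the case $n=1$ is handled in~\cite{GL} by clasper surgery, tracking the $n-1$ boundary-conjugation parameters $\alpha_i$ simultaneously across multiple boundary components, and reconciling this with the 0-framing constraint defining $\H^0_{g,n}$, requires a careful inductive scheme compatible with the filtration from~\cite{So}. I expect the inductive step to proceed by first establishing the existence of $K_k$-realizations inside $\H_{g,n}[k]$ using Y-surgeries supported in the interior of $\Sigma_{g,n}\times[0,1]$, and then correcting the framing and the boundary conjugators by disjoint surgeries that do not alter $\tilde\eta_{k+1}$ modulo the next filtration stage.
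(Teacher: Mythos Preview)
Your overall strategy differs substantially from the paper's, and the realization step as you have sketched it has a real gap.

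For surjectivity onto $\Aut_*(F/F_k)$, the paper does \emph{not} use clasper or Y-graph surgery. Following \cite[Theorem~3]{GL}, it proceeds by bordism theory: given $\phi\in\Aut_*(F/F_k)$, one builds $f=f_+\cup f_-\colon\Sigma\cup_\partial\Sigma\to K(F/F_k,1)$ realizing $\id$ and $\phi$ on $\pi_1$, observes that the bordism class $[f]\in\Omega_2(F/F_k)\cong H_2(F/F_k)$ vanishes because the lift $\tilde\phi$ factors $f$ through $K(F/F_{k+1},1)$ and $H_2(F/F_{k+1})\to H_2(F/F_k)$ is the zero map, and then performs homology surgery on the resulting filling $3$-manifold to kill $\Ker\{H_1(M)\to H\}$ (first the torsion-free part, then the torsion via the linking pairing). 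The $0$-framing is corrected afterwards by composing $i_-$ with boundary Dehn twists; this does not change $\tilde\eta_l$. No induction on $k$ and no knowledge of the structure of $K_k$ is required.

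Your clasper scheme, by contrast, hinges on realizing every element of $K_k=\Ker\{\Aut_*(F/F_{k+1})\to\Aut_*(F/F_k)\}$ by a Y-surgery. For $n=1$ this works because one has the independent identification $K_k\cong \D_k(H)$, and tree claspers are known to generate $\D_k(H)$. For $n>1$, however, $K_k$ is \emph{not} $\D_k(H)$; in the paper's notation $K_k=\K_{k+1,k}$, and its identification with anything concrete is established only \emph{after} the surjectivity theorem (Corollary~4.2 and Theorem~4.4). So your inductive step presupposes a description of $K_k$ that you do not yet have, and ``a single Y-graph realizes an elementary generator'' is not justified without that description. This is a genuine gap, not merely a detail to fill in. (Incidentally, the surjectivity proof in \cite{GL} itself is the bordism argument, not a clasper argument, so your appeal to extending their clasper realization is misplaced.)

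A smaller point: your equivalence ``$\tilde\mu_k(M)=0$ iff $\tilde\eta_k(M)(x_i)=x_i$'' is stated at the wrong level. The correct characterization, used in the paper's proof of Theorem~4.1(3), is that for $M\in\H^0[k]$ one has $M\in\H(k)$ iff $\tilde\eta_{k+1}(M)(x_i)=x_i$ for all $i$; the argument uses that $[x_i,\mu_k(M)_i]=1$ in $F/F_{k+1}$ forces $\mu_k(M)_i\in\langle x_i\rangle$, and then the $0$-framing hypothesis forces it to be trivial.
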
	
We construct a descending filtration of $\Aut_*(F/F_k)$ which corresponds to images of the combined filtration under $\tilde\eta_k$. More details are discussed in Section~\ref{subsec:result_Johnson}.

\subsection{Extended Milnor invariants}
Milnor's $\bar\mu$-invariants of links are actually invariants on string links without indeterminacy~\cite{HL}. Orr~\cite{Or} determined the precise number of linearly independent Milnor invariants of length $k$ as the rank of~$\D_k(H)$. The author defined extended Milnor invariants $\tilde\mu_k\colon\H_{g,n}\to (F/F_k)^{2g+n-1}$ and a filtration $\H_{g,n}(k):=\Ker\tilde\mu_k$ on $\H_{g,n}$ in~\cite{So}. The invariants are not only an extension of the Milnor invariants for (string) links, but also a generalization of extended Johnson homomorphisms of~$\H_{g,1}$. If $n=1$, they are equivalent to the extended Johnson homomorphisms on~$\H_{g,1}$, and if $g=0$, the extended Milnor invariants are equivalent to the Milnor invariants of string links in homology 3-balls. Note that the Orr's result above can be restated that $\frac{\H_{0,n}(k)}{\H_{0,n}(k+1)}\cong \D_k(H)$. In \cite{So}, it is revealed that  for general $g\geq 0$ and $n>0$, the successive quotients $\frac{\H_{g,n}(k)}{\H_{g,n}(k+1)}$ are finitely generated free abelian if $k\geq 2$, and upper and lower bounds of their ranks are provided. Now we complete our previous work as follows. 
 \begin{theorem} \label{theorem:intro}
For $g\geq 0$, $n\geq1$ and $k\geq2$, the extended Milnor invariants induce isomorphisms
	$$\frac{\H_{g,n}(k)}{\H_{g,n}(k+1)} \cong \D_k(H),\quad \frac{\H^0_{g,n}[k+1]}{\H_{g,n}(k+1)} \cong \D_k(H'), \quad\frac{\H^0_{g,n}[2]}{\H_{g,n}(2)}\times\Z^{n-1} \cong \D_1(H')$$
	 where $H'=H_1(\Sigma_{0,n})$.
	Consequently, for $k\geq2$, there are $(2g+n-1)N_k-N_{k+1}$ linearly independent extended Milnor invariants of length $k+1$ distinguishing homology clinders with vanishing extended Milnor invariants of lenght $\leq k$. Here, $N_k=\frac{1}{k}\sum_{d|k} \varphi(d)\; (2g+n-1)^{k/d}$ where $\varphi$ is the M\"obius function.
\end{theorem}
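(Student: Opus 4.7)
The strategy is to complete the partial results of~\cite{So}, which already established upper bounds of rank $\rank \D_k(H)$ on the successive quotient, by constructing enough homology cylinders to realize every element of~$\D_k(H)$. The proof combines three ingredients: Orr's realization theorem for string links (the case $g=0$), the Garoufalidis-Levine surjectivity statement $\frac{\H_{g,1}[k]}{\H_{g,1}[k+1]}\cong \D_k(H_1(\Sigma_{g,1}))$, and the geometric identification of elements of $\H_{g,n}$ with framed string links sitting inside homology cylinders over~$\Sigma_{g,1}$.

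First I would recall from Section~\ref{subsec:Milnor} that on $\H_{g,n}(k)$ with $k\geq 2$, the value of $\tilde\mu_{k+1}$ is a tuple $(y_1,\ldots,y_{2g+n-1})$ with $y_i\in F_k/F_{k+1}\cong \L_k(H)$, and the boundary relation $\prod[x_i,y_i]=1$ modulo $F_{k+2}$ forces $\sum_i x_i\otimes y_i$ to lie in the kernel of the bracket $H\otimes \L_k(H)\to \L_{k+1}(H)$. This produces a well-defined map
\[
\H_{g,n}(k)/\H_{g,n}(k+1) \longrightarrow \D_k(H),
\]
injective by the very definition $\H_{g,n}(k+1)=\Ker\tilde\mu_{k+1}$, with image of rank at most $\rank \D_k(H)$.

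The main task is surjectivity. Given $\xi\in \D_k(H)$, one decomposes $H=H_1(\Sigma_{g,1})\oplus H'$ via the collapse $\Sigma_{g,n}\to\Sigma_{g,1}$ and writes $\xi$ as a sum of basic brackets. Brackets with all letters from $H_1(\Sigma_{g,1})$ are realized inside $\H_{g,1}[k]\hookrightarrow \H_{g,n}[k]$ by the Garoufalidis-Levine theorem; brackets with all letters from $H'$ are realized by string links via Orr; and mixed brackets are produced by a clasper (equivalently, an iterated Borromean surgery) that links a string-link strand with a curve supported in the closed-surface direction. Compatibility on the associated graded, where addition in $\H_{g,n}(k)/\H_{g,n}(k+1)$ agrees with the Lie-algebra sum, ensures these partial realizations combine to produce $\xi$ exactly.

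The second isomorphism is then read off: an element of $\H^0_{g,n}[k+1]$ is a $0$-framed cylinder whose induced automorphism of $F/F_{k+1}$ is inner of the conjugation form fixing $[\partial_n]$, so the surviving Milnor obstruction modulo $\H_{g,n}(k+1)$ lives entirely in the $H'$-direction, giving $\D_k(H')$; the $\Z^{n-1}$ summand in the $k=1$ case records the framings of the $n-1$ interior boundary components, consistently with the description of $\H_{0,n}$ as the framed string link concordance group. The rank count then follows from the short exact sequence $0\to \D_k(H)\to H\otimes\L_k(H)\to \L_{k+1}(H)\to 0$ combined with Witt's formula, yielding $(2g+n-1)N_k-N_{k+1}$ independent invariants. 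The main obstacle will be the realization of mixed brackets: constructing explicit claspers that interact with both the genus-$g$ direction and the string-link direction, and then verifying inductively, via the Magnus expansion, that the leading term of $\tilde\mu_{k+1}$ on the resulting cylinder is the intended bracket.
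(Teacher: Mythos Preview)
Your approach differs substantially from the paper's. The paper does not assemble $\D_k(H)$ piecewise from the $g=0$ and $n=1$ cases with claspers for the mixed part; instead it first proves (Theorem~\ref{theorem:surjections}) that $\tilde\eta_l^0\colon\H^0_{g,n}\to\Aut_*(F/F_l)$ is surjective for every $l$, by a single bordism-and-surgery argument in the Garoufalidis--Levine style (realize $\phi$ by a map $\Sigma\cup_\partial\Sigma\to K(F/F_l,1)$, use the vanishing of $H_2(F/F_{l+1})\to H_2(F/F_l)$ to fill by a $3$-manifold, then surger to a homology cylinder). It then constructs a purely algebraic surjection $\Theta\colon\A_{k+2,k}\to\Ker\p_k\cong\D_k(H)$ with $\Theta\circ\tilde\eta_{k+2}=\tilde\mu_{k+1}$, whose surjectivity is checked by writing down explicit lifts in $\Aut(F/F_{k+2})$. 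No claspers and no decomposition $H=H'\oplus H''$ enter. Your clasper route could in principle be made to work, but the ``mixed'' realization you flag as the main obstacle is essentially the entire new content once the known boundary cases are removed, and you have not sketched how it goes; the paper's uniform argument bypasses this completely.

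There is also a genuine gap in your treatment of the second isomorphism. For $M\in\H^0_{g,n}[k+1]$ the automorphism $\tilde\eta_{k+1}(M)$ is the \emph{identity}, not merely inner, so $\mu'_{k+1}(M)_j=\mu''_{k+1}(M)_j=1$ and $\tilde\mu_{k+1}(M)$ lands in $(F_k/F_{k+1})^{n-1}\times 0^{2g}\cap\Ker\p_k$. But each $\mu_{k+1}(M)_i$ lies a priori only in $F_k/F_{k+1}\cong\L_k(H)$, not in $\L_k(H')$. The assertion that this intersection equals $\D_k(H')$ --- equivalently that $\Ker\{H'\otimes\L_k(H)\to\L_{k+1}(H)\}=\D_k(H')$ --- is a nontrivial algebraic fact (Lemma~\ref{lemma:D'} in the paper), proved by a Magnus-expansion argument showing that $\prod_i[x_i,\alpha_i]=1$ in $F_{k+1}/F_{k+2}$ forces each $\alpha_i$ to lie in $F'_k/F'_{k+1}$. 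Your sentence ``the surviving Milnor obstruction lives entirely in the $H'$-direction, giving $\D_k(H')$'' asserts this without proof, and nothing in your outline supplies it.
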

Also, we clarify the image of $\tilde\mu_{k+1}\colon \H_{g,n}(k)\to (F_k/F_{k+1})^{2g+n-1}$ in Theorem~\ref{theorem:Milnor}.
Based on Theorem~\ref{theorem:intro}, the extended Milnor invariants can be regarded as a proper generalization of the extended Johnson homormorphism on $\H_{g,1}$ and the Milnor invariants on $\H_{0,n}$. Habegger showed that the extended Johnson homomorphisms on $\H_{g,1}$ coincides with Milnor's invariants on the string link concordance group, which is called a \emph{Milnor-Johnson correspondence}~\cite{Habe}. It also can be integrated with the extended Milnor invariants.
We remark that vanishing of the extended Milnor invariants is the exact criterion that Hirzebruch-type intersection form defect invariants from iterated $p$-covers are defined~\cite{So}.
 
 \subsection{Extended Orr invariants}
Orr \cite{Or} introduced homotopy invariants of based links to find the number of linearly independent Milnor invariants. Igusa and Orr~\cite{IO} saw the relation of the invariants to $k$-slice and to Milnor invariants to prove the $k$-slice conjecture, which gives a geometric characterization for the vanishing of the Milnor invariants. They reduced the invariants to invariants with values in~$H_3(F/F_k)$. We extend (Igusa-)Orr invariants to homology cylinders in Section~\ref{subsec:Orr}, and obtain the following.

\begin{theorem}
For $k\geq2$, the extended Orr invariant $\tilde\theta_k\colon\H_{g,n}(k) \to H_3(F/F_k)$ is surjective.
It induces isomorphisms
	 \begin{align*}
	  \frac{\H_{g,n}(k)}{\H_{g,n}(2k-1)} &\cong H_3(F/F_k),\\
	 \frac{\H_{g,n}(k)}{\H_{g,n}(k+1)} &\cong \Coker\{H_3(F/F_{k+1})\rightarrow H_3(F/F_k)\}.
	 \end{align*}
\end{theorem}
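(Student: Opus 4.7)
The plan is to establish three things in order: surjectivity of $\tilde\theta_k$, the first isomorphism, and then the second isomorphism via a short exact sequence.

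For surjectivity, I would realize an arbitrary class $\alpha\in H_3(F/F_k)$ geometrically. Represent $\alpha$ by a closed oriented 3-manifold $N$ together with a map $f\colon N\to K(F/F_k,1)$. By doing surgery on $N$ along curves in $\Ker(\pi_1(N)\to F/F_k)$, I can arrange that $N$ is an integer homology 3-sphere without changing the homotopy class of $f$. The connect sum $M:=(\Sigma_{g,n}\times[0,1])\,\#\,N$ is then a homology cylinder, and since the connect-sum leaves $i_\pm$ undisturbed, all extended Milnor invariants of $M$ of length $\leq k$ vanish, so $M\in\H_{g,n}(k)$. Unwinding the construction of $\tilde\theta_k$ in Section~\ref{subsec:Orr} shows that $\tilde\theta_k(M)$ records precisely the bordism class of $(N,f)$, so $\tilde\theta_k(M)=\alpha$.

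To identify $\Ker\tilde\theta_k=\H_{g,n}(2k-1)$, I would prove both inclusions separately. The inclusion $\H_{g,n}(2k-1)\subseteq\Ker\tilde\theta_k$ follows from functoriality: when $M\in\H_{g,n}(2k-1)$, the closed 3-manifold used to define $\tilde\theta_k(M)$ carries an $F/F_{2k-1}$-structure, and its composition with the surjection $F/F_{2k-1}\twoheadrightarrow F/F_k$ represents a null-bordant class in $H_3(F/F_k)$ by the standard Igusa--Orr argument~\cite{IO}. The reverse inclusion is the technical core: starting from a null-bordism in $H_3(F/F_k)$ of the 3-manifold associated to an $M$ with $\tilde\theta_k(M)=0$, I would use the null-bordism as surgery data to build a 4-dimensional cobordism of homology cylinders from $M$ to some $M'\in\H_{g,n}(2k-1)$, analogous to the $k$-slice construction carried out relative to $\Sigma_{g,n}\times[0,1]$.

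The second isomorphism is then purely formal. For $k\geq 2$ one has $2k-1\geq k+1$, hence $\H_{g,n}(2k-1)\subseteq\H_{g,n}(k+1)$, giving the short exact sequence
\[
0\to\frac{\H_{g,n}(k+1)}{\H_{g,n}(2k-1)}\to\frac{\H_{g,n}(k)}{\H_{g,n}(2k-1)}\to\frac{\H_{g,n}(k)}{\H_{g,n}(k+1)}\to 0.
\]
Applying the first isomorphism to both $k$ and $k+1$, together with the naturality of $\tilde\theta_{\bullet}$ under the projection $F/F_{k+1}\twoheadrightarrow F/F_k$, identifies the left term with the image of $H_3(F/F_{k+1})\to H_3(F/F_k)$; taking cokernels yields the claimed formula. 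The main obstacle in the whole argument is the surgery step in the reverse kernel inclusion, which must simultaneously preserve the homology cylinder structure on the boundary and sharpen the extended-Milnor vanishing depth from $k$ to $2k-1$.
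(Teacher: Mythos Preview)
Your surjectivity argument has a genuine error. If $N$ is an integral homology $3$-sphere then $H_1(N)=0$, and since $F/F_k$ is nilpotent every homomorphism $\pi_1(N)\to F/F_k$ is trivial; hence any map $N\to K(F/F_k,1)$ is null-homotopic and represents $0\in H_3(F/F_k)$. So surgering $N$ to a homology sphere necessarily kills the class~$\alpha$. There is a second problem: the map $\hat M\to K(F/F_k,1)$ that defines $\tilde\theta_k$ is the one induced by the markings via the isomorphism $F/F_k\cong\pi_1(\hat M)/\pi_1(\hat M)_k$, not your original~$f$. On the $N$-summand this map factors through $\pi_1(N)\to\pi_1(\hat M)/\pi_1(\hat M)_k$, which is again trivial. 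Thus $\tilde\theta_k\big((\Sigma\times I)\#N\big)=\tilde\theta_k(\Sigma\times I)=0$ for every homology sphere~$N$, and the connect-sum construction can never reach a nonzero class.

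The paper's route is different on both points. For surjectivity, given $(X,f)\in\Omega_3(F/F_k)\cong H_3(F/F_k)$ one performs surgery (parallel to the proof of Theorem~\ref{theorem:surjections}) so that $f_*\colon H_1(X)\to H$ becomes an isomorphism, then finds an embedded $\Sigma_{g,n}\subset X$ whose generators map to the standard generators of $F/F_k$; cutting $X$ along this surface yields the required homology cylinder. For the isomorphisms, the paper avoids the geometric $k$-slice surgery you describe as the ``technical core'': it instead combines the already-established Theorem~\ref{theorem:rank}, giving $\H(k)/\H(k+1)\cong\D_k(H)$, with the Igusa--Orr computation $H_3(F/F_k)\cong\bigoplus_{i=k}^{2k-2}\D_i(H)$ and $\Coker\psi_{k+1,k}\cong\D_k(H)$; surjectivity together with a rank comparison then forces both isomorphisms. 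Your short-exact-sequence deduction of the second isomorphism from the first is correct in outline, but note that it needs surjectivity of $\tilde\theta_{k+1}$ to identify $\tilde\theta_k(\H(k+1))$ with $\Im\psi_{k+1,k}$.
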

Cha and Orr defined transfinite Milnor invariants for closed 3-manifolds in~\cite{CO}.  
Our $\tilde\theta_k(M)$ is the same as the Cha-Orr invariant $\theta_k(\hM)$ of finite length in~\cite{CO}, with the closure $\hM$ of~$M$.
	
The above theorems imply relations of the filtration of $\H_{g,n}$ to automorphism groups of free nilpotent groups, graded free Lie algebras, and the third homology of free nilpotent groups, respectively. 
We also establish relations among the three invariants in Section~\ref{subsec:relation}.
Further, we look into $\frac{\H_{g,n}(1)}{\H_{g,n}(2)}$, which is the only non-abelian case in Section~\ref{subsec:first level}. 

\subsection{Organization}
In Section~\ref{sec:def}, we recall definitions of homology cylinders, their homology cobordism group, extended Johnson homomorphisms, and extended Milnor invariants. Also we provide previously known results about the two invariants. We define extended Orr invariants for homology cylinders. In Section~\ref{sec:refine}, we define 0-framed homology cylinders and refine the two filtrations to be totally ordered. In Section~\ref{sec:image}, we determine their images under the three invariants, and build a connection between the invariants. We find the rank of the successive quotients of the filtration. Finally in Section~\ref{sec:proofs}, we give proofs of the theorems stated in Section~\ref{sec:image}.

In this paper, manifolds are assumed to be compact and oriented. Our results hold in both topological and smooth categories.

\subsection*{Acknowledgements}
The author thanks Jae Choon Cha for helpful discussions. This work was supported by IBS-R003-D1.

\section{Definitions and known results} \label{sec:def}
\subsection{Homology cylinder cobordism group}
We recall precise definitions about homology cylinders. Let $\Sigma_{g,n}$, or simply $\Sigma$, be a surface of $g$ genus with $n$ boundary components. We only consider the case~$n>0$.

\begin{definition} \label{def:homology cylinder}
  A \emph{homology cylinder over} $\Sigma$ consists of a 3-manifold $M$ with two embeddings $i_+^{\vphantom{}},~i_-^{\vphantom{}}\colon \Sigma \hookrightarrow \partial M$, called \emph{markings}, such that
  \begin{enumerate}
    \item   $i_+^{\vphantom{}}|_{\partial \Sigma} = i_-^{\vphantom{}}|_{\partial \Sigma}$,
    \item   $i_+\cup i_- \colon \Sigma\cup_\partial (-\Sigma) \to \partial M $ is an orientation-preserving homeomorphism, and
    \item   $i_+^{\vphantom{}}, i_-^{\vphantom{}}$ induce isomorphisms $H_k(\Sigma;\Z)\to H_k(M;\Z)$ for all $k \geq 0$.
  \end{enumerate}
We denote a homology cylinder by $(M,i_+^{\vphantom{}},i_-^{\vphantom{}})$ or simply by~$M$.
\end{definition}

Two homology cylinders $(M,i_+^{\vphantom{}},i_-^{\vphantom{}})$ and $(N,j_+^{\vphantom{}},j_-^{\vphantom{}})$ over $\Sigma_{g,n}$ are said to be \emph{isomorphic} if there exists an orientation-preserving homeomorphism $f\colon M \to N$ satisfying $j_+^{\vphantom{}}=f \circ i_+^{\vphantom{}}$ and $j_-^{\vphantom{}}=f \circ i_-^{\vphantom{}}$. Denote by $\C_{g,n}$ the set of all isomorphism classes of homology cylinders over~$\Sigma_{g,n}$. We define a product operation on $\C_{g,n}$ by
$$(M,i_+^{\vphantom{}},i_-^{\vphantom{}})\cdot (N,j_+^{\vphantom{}},j_-^{\vphantom{}}):=(M\cup_{i_-^{\vphantom{}}\circ (j_+^{\vphantom{}})^{-1}} N, i_+^{\vphantom{}}, j_-^{\vphantom{}})$$
for $(M,i_+^{\vphantom{}},i_-^{\vphantom{}}),~(N,j_+^{\vphantom{}},j_-^{\vphantom{}}) \in \C_{g,n}$, which endows $\C_{g,n}$ with a monoid structure. The identity is $((\Sigma_{g,n} \times I)/(z,0)=(z,t), \id \times 1, \id\times 0)$ where $z\in\partial\Sigma, t\in I$.
Here and after, $I$ denote the interval~$[0,1]$. 

\begin{definition} \label{def:homology cobordism}
  Two homology cylinders $(M,i_+^{\vphantom{}},i_-^{\vphantom{}})$ and $(N,j_+^{\vphantom{}},j_-^{\vphantom{}})$ over $\Sigma_{g,n}$ are said to be \emph{homology cobordant} if there exists a 4-manifold $W$ such that
  \begin{enumerate}
    \item   $\partial W = M \cup (-N) /\sim$, where $\sim$ identifies $i_+^{\vphantom{}}(x)$ with $j_+^{\vphantom{}}(x)$ and $i_-^{\vphantom{}}(x)$ with $j_-^{\vphantom{}}(x)$ for all $x\in\Sigma_{g,n}$, and
    \item   the inclusions $M \hookrightarrow W$, $N \hookrightarrow W$ induce isomorphisms on the integral homology.
  \end{enumerate}
\end{definition}

We denote by $\H_{g,n}$ the set of homology cobordism classes of elements of~$\C_{g,n}$. We usually omit the subscripts and simply write $\H$ for general~$g,n$. By abuse of notation, we also write $(M,i_+,i_-)$, or simply $M$ for the class of~$(M,i_+,i_-)$. The monoid structure on $\C_{g,n}$ descends to a group structure on~$\H_{g,n}$, with $(M,i_+^{\vphantom{}},i_-^{\vphantom{}})^{-1}=(-M,i_-^{\vphantom{}},i_+)$. We call this group the \emph{homology cobordism group} of homology cylinders, or simply \emph{homology cylinder cobordism group}. In fact, there are two kinds of groups $\H_{g,n}^{\mathrm{smooth}}$ and $\H_{g,n}^{\mathrm{top}}$ depending on whether the homology cobordism is smooth or topological, and there exists a canonical epimorphism $\H_{g,n}^{\mathrm{smooth}} \twoheadrightarrow \H_{g,n}^{\mathrm{top}}$ whose kernel contains an abelian group of infinite rank~\cite{CFK}. In this paper, however, the author does not distinguish the two cases since everything holds in both cases. The fact that the mapping class group over $\Sigma_{g,n}$ is a subgroup of $\H_{g,n}$ implies $\H_{g,n}$ is non-abelian except $(g,n)=(0,0), (0,1)$ and~$(0,2)$. For any pairs $(g,n)$ and $(g',n')$ satisfying $n,n'>0$, $g\leq g'$, and $g+n\leq g'+n'$, there is an injective homomorphism $\H_{g,n} \hookrightarrow \H_{g',n'}$~\cite{So}.

\begin{figure}[h]
  \begin{center}
    \includegraphics[scale=.9]{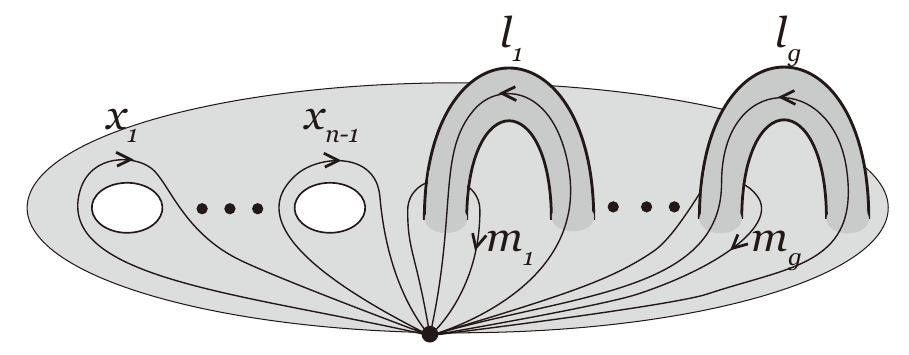}
    \caption{A generating set for $\pi_1(\Sigma_{g,n})$}
    \label{figure:Sigma}
  \end{center}
\end{figure}

Let $\partial_1,\partial_2,\ldots,\partial_n$ be the boundary components of~$\Sigma$. Choose a basepoint $*$ of $\Sigma$ on $\partial_n$ and fix a generating set $\{x_1,\ldots,x_{n-1},m_1,\ldots,m_g,l_1,\ldots,l_g\}$ for $\pi_1(\Sigma,*)$ such that $x_i$ is homotopic to the $i$th boundary component $\partial_i$ and $m_j$, $l_j$ correspond to a meridian and a longitude of the $j$th handle. Since our $n$ is nonzero, the group is free on the above $2g+n-1$ generators. Let $F=\pi_1(\Sigma,*)$ and $H=H_1(\Sigma)$. We may assume that the element $[\partial_n]\in F$ is represented by $\prod_i x_i\prod_j[m_j,l_j]$, see Figure~\ref{figure:Sigma}.

\subsection{Extended Johnson homomorphisms and filtration} \label{subsec:Johnson}
In \cite{GL, L01}, Garoufalidis and Levine defined homomorphisms $\tilde\eta_k\colon \H_{g,1} \to \Aut(F/F_k)$ and a filtration $\H_{g,1}[k]:=\Ker \tilde\eta_k$ as extensions of the Johnson homomorphisms and the Johnson filtration of the mapping class group $\M_{g,1}$~\cite{Jo}. It is a straightforward consequence of Stallings' theorem in~\cite{Sta}. The $\tilde\eta_k$ is defined to be $(i_+)_{k}^{-1}\circ (i_-)_k$ where $(i_\pm)_k \colon F/F_k \to \pi_1(M)/\pi_1(M)_k$ is the isomorphisms induced from~$i_\pm$. They can be considered on $\H_{g,n}$ in the same way. We call the maps $\tilde\eta_k$ on $\H_{g,n}$ \emph{extended Johnson homomorphisms} (they were referred to as `Garoufalidis-Levine homomorphisms'  in \cite{So}). 
They proved that $\tilde\eta_k \colon \H_{g,1} \to \Aut_0(F/F_k)$ are surjective where $F=\pi_1(\Sigma_{g,1})=\langle m_1,\ldots,m_g,l_1,\ldots,l_g\rangle$ and 
    \begin{align*}
  \Aut_0(F/F_k):=\{ \phi \in \Aut(F/F_k)~|~&\textrm{there is a lift }\tilde\phi\colon F/F_{k+1}\to F/F_{k+1} \\ &\textrm{such that }\tilde\phi([\partial_n])=[\partial_n]  \} .
\end{align*}
and the image of $\H_{g,1}[k]$ under~$\tilde\eta_{k+1}$ is isomorphic to $\Ker\{\Aut_0(F/F_{k+1})\to\Aut_0(F/F_{k})\}$. Recall that $\L_k(H)$ is the degree $k$ part of the free Lie algebra over $H$, and $\D_k(H):=\Ker\{H\otimes \L_k(H)\to \L_{k+1}(H)\}$. For $k\geq 2$, $$\D_k(H)\cong\Ker\{\Aut_0(F/F_{k+1})\rightarrow\Aut_0(F/F_{k})\} .$$ Since $\D_k(H)$ is free abelian and the rank is well-known, the successive quotient groups $\frac{\H_{g,1}[k]}{\H_{g,1}[k+1]}$, which is isomorphic to the image $\tilde\eta_{k+1}(\H_{g,1}[k])$, are clarified. We remark that the image $\eta_{k+1}(\M_{g,1}[k])$ of the Johnson subgroup $\M_{g,1}[k]:=\H_{g,1}[k]\cap \M_{g,1}$ of the mapping class group is unknown.

\begin{remark} From the result about the Artin representation for the concordance group of string links in $D^2\times I$, Habegger and Lin \cite[Theorem~1.1]{HL98} proved that $\tilde\eta_k \colon \H_{0,n} \to \Aut_1(F/F_k)$ is surjective where $F=\pi_1(\Sigma_{0,n})=\langle x_1, \ldots, x_{n-1}\rangle$ and 
    \begin{align*}
  \Aut_1(F/F_k):=\{ \phi \in \Aut(F/F_k)~|~&\phi(x_i)=\lambda_i^{-1} x_i \lambda_i
  \textrm{ for some }\lambda_i \in F/F_{k-1} \\
  &\textrm{ and }\phi(x_1\cdots x_{n-1})=x_1\cdots x_{n-1} \} .
\end{align*}
Note that $$\D_k(H)\cong \Ker\{\Aut_1(F/F_{k+2})\rightarrow \Aut_1(F/F_{k+1})\}$$ for $k\geq 2$, and the Artin representation corresponds to the Milnor invariant on string links. Two invariants give rise to the same filtration whose the graded quotients are isomorphic to~$\D_k(H)$. See \cite{HL98, Or}.\end{remark}

\subsection{Extended Milnor invariants and filtration} \label{subsec:Milnor}
Let $(M, i_+^{\vphantom{}},i_-^{\vphantom{}})$ be a homology cylinder over~$\Sigma_{g,n}$. The chosen $x_i^{\vphantom{}}$ is of the form $[\alpha_i^{-1}\cdot\beta_i\cdot \alpha_i]$ for a closed path $\beta_i \colon I \to I/\partial I\xrightarrow{\simeq} \partial_i$ such that the latter map is a homeomorphism and a path $\alpha_i$ from $\beta_i(0)$ to $*$.
Consider the loop $(i_+\circ\alpha_i^{-1})\cdot(i_-\circ\alpha_i)$ in $M$. If $M$ were a framed string link exterior, this loop would be its $i$th longitude. Recall that $(i_\pm)_{k}$ are the induced isomorphisms $F/F_k \to \pi_1(M)/\pi_1(M)_k$. We define $\mu_k(M)_i := (i_+)_{k}^{-1} ([(i_+\circ\alpha_i^{-1}) \cdot (i_-\circ\alpha_i)])$ for the class of the loop $(i_+\circ\alpha_i^{-1})\cdot(i_-\circ\alpha_i)$ in $\pi_1(M,i_+(*))$. It is independent of the choice of $\alpha_i$, and it depends only on the choice of $x_i^{\vphantom{}}$ in~$\pi_1(\Sigma,*)$.
Also we define $\mu'_k(M)_j:=(i_+)_{k}^{-1}([(i_+\circ m_j^{-1})\cdot (i_-\circ m_j)])$ and  $\mu''_k(M)_j:=(i_+)_{k}^{-1}([(i_+\circ l_j^{-1})\cdot (i_-\circ l_j)])$. We denote the ($2g+n-1$)-tuple of $\mu_k(M)_i$, $\mu_k'(M)_j$, and $\mu''_k(M)$ of~$F/F_k$ as~$\tilde\mu_k(M)$. 

In \cite{So}, the author proved that $\tilde\mu_k$ is a crossed homomorphism from $\H_{g,n}$ to $(F/F_k)^{2g+n-1}$ with
$$\tilde\mu_k(MN)_\bullet=\tilde\mu_k(M)_\bullet\cdot \tilde\eta_k(M)(\tilde\mu_k(N)_\bullet),$$
and is a homomorphism on $\Ker\tilde\mu_{k-1}$ and $\Ker\tilde\eta_k$ for~$k>2$. We denote by $\H_{g,n}(k)$ the kernel of~$\tilde\mu_k$. All the $\H_{g,n}(k)$ are normal subgroups of $\H_{g,n}$, and $\{\H_{g,n}(k)\}_k$ is a descending filtration. Since there is an injection $\frac{\H(k)}{\H(k+1)} \hookrightarrow \D_k(H)$ for $k\geq 2$, it turned out that the successive quotient groups $\frac{\H(k)}{\H(k+1)}$ are finitely generated free abelian if~$k\geq 2$. 

There is a relation between extended Johnson homomorphisms and extended Milnor invariants that 	
\begin{equation} \label{eqn:relation}
	\begin{split}
		(\tilde\eta_k(M))(x_i)&= \mu_k(M)_i^{-1}\, x_i \,\mu_k(M)_i \\
		(\tilde\eta_k(M))(m_j)&= m_j \, \mu'_k(M)_j \\
		(\tilde\eta_k(M))(l_j)&= l_j \, \mu''_k(M)_j\qquad \textrm{ in } F/F_k.
	\end{split}
\end{equation}
Remark that 
the definitions of $\mu'_k$ and $\mu''_k$ are slightly different from those in \cite{So}.	
	
\subsection{Extended Orr invariants} \label{subsec:Orr}
To find the number of linearly independent Milnor invariants of a length, Orr defined concordance invariants $\theta_k$ of based links in $S^3$ with vanishing length $\leq k$ Milnor invariants, whose value is in $\pi_3(K_k)$ where $K_k$ is a mapping cone of the inclusion $K(F,1)\to K(F/F_k,1)$ in~\cite{Or}. Here, $K(G,1)$ is an Eilenberg-MacLane space,
Let $L$ be a link in $S^3$ for which Milnor invariants of length $\leq k$ vanish, and $\tau$ be a \emph{basing} $F\to \pi_1(E_L)$ for the exterior $E_L$ of~$L$. Then, 0-framed longitudes of $L$ are in $\pi_1(E_L)_k$, so $F/F_k\cong \pi_1(E_L)/\pi_1(E_L)_k$. We have a map $E_L\to K(\pi_1(E_L),1)\to K(F/F_k,1) \to K_k$ which sends meridians to null-homotopic loops, and so the map extends to $S^3\to K_k$. The homotopy class is defined as $\theta_k(L,\tau) \in \pi_3(K_k)$.
He found the precise number by verifying that the Milnor invariants of length $k+1$ vanish if and only if $\theta_k$ vanishes in $\Coker\{\pi_3(K_{k+1})\to\pi_3(K_k)\}$, which is isomorphic to $\Coker\{H_3(F/F_{k+1})\to H_3(F/F_k)\}$. Igusa and Orr considered invariants valued in $H_3(F/F_k)$ by composing the Hurewicz homomororphism with $\theta_k$, and denoted them by $\bar\theta_k$ in~\cite{IO}. This invariant vanishes for a based link if and only if the Milnor invariants of length $\leq 2k-1$ vanish for that link. 
The invariant $\bar\theta_k(L,\tau)\in H_3(F/F_k)$ is equal to the image of the fundamental class $[M_L]\in H_3(M_L)$ of the zero-surgery manifold $M_L$ of $L$ under a map $M_L\to K(F/F_k,1)$ induced from the basing.

We can define an analogous invariant $\tilde\theta_k\colon\H(k) \to H_3(F/F_k)$. 
Recall that for a homology cylinder $M$, there is an associated closed manifold $\hM$ obtained from $M$ by identifying $i_+(z)$ and $i_i(z)$ for each $z\in\Sigma$, which is called the \emph{closure} of $M$. For a homology cylinder $M$ with trivial $\tilde\mu_k$, we have an isomorphism $F/F_k\cong \pi_1(\hM)/\pi_1(\hM)_k$, see \cite{So}. Define $\tilde\theta_k(M)\in H_3(F/F_k)$ as the image of of $[\hM]$ under $\hM\to K(F/F_k,1)$.
It is a homology cobordism invariant, and is a homomorphism.
We note that $\tilde\theta_k(M)$ is the same as the Cha-Orr invariant $\theta_k(\hM)\in H_3(F/F_k)$ in~\cite{CO}. It can be also thought as a generalization of the (Igusa-)Orr invariant $\theta_k(L,\tau)\in H_3(F/F_k)$ to based links in a homology 3-sphere corresponding to~$M$ along the map from $\H$ to the set of based links in homology 3-spheres. The map will be appear in a future paper.

\section{Refinement and combination of the two filtrations} \label{sec:refine}
We consider two descending filtrations of $\H_{g,n}$: extended Johnson filtration $\H_{g,n}[k]=\Ker \tilde\eta_k$ and extended Milnor filtration $\H_{g,n}(k)=\Ker\tilde\mu_k$
$$\H=\H[1]\,\rhd \H[2]\, \rhd \H[3]\, \rhd \cdots \rhd \,\H[k-1]\, \rhd \H[k]\,\rhd \H[k+1]\, \rhd \cdots $$
$$\H=\H(1)\rhd \H(2) \rhd \H(3) \rhd \cdots \rhd \H(k-1) \rhd \H(k)\rhd \H(k+1) \rhd \cdots $$
Though $\H[k]\supseteq\H(k)$, the two filtrations are not linearly ordered since $\H(k)\nsupseteq\H[k+1]$ unless $n= 1$. Only if $n=1$, $\H_{g,1}[k]=\H_{g,1}(k)$. We will refine and combine them to derive a linearly ordered filtration in Section~\ref{subsection:refine}.

As mentioned in Section~2.2, it is revealed that in the case of $n=1$,
$\frac{\H_{g,1}[k]}{\H_{g,1}[k+1]}$ is isomorphic to $\Ker\{\Aut_0(F/F_{k+1})\to\Aut_0(F/F_{k})\}\cong \D_{k}(H)$ for $k\geq 2$, and in the case of $g=0$, 
$\frac{\H_{0,n}(k)}{\H_{0,n}(k+1)}$ is isomorphic to $\Ker\{\Aut_1(F/F_{k+2})\to \Aut_1(F/F_{k+1})\}\cong\D_k(H)$ for $k\geq 2$.
We will merge and extend the theories on $\H_{g,1}$ and $\H_{0,n}$ to $\H_{g,n}$ by using $\H_{g,n}(k)$, that is, we will show that $\frac{\H_{g,n}(k)}{\H_{g,n}(k+1)}$ is isomorphic to $\D_k(H)$, but $\frac{\H_{g,n}[k]}{\H_{g,n}[k+1]}$ is not. 
We remind that $F$, $H$ and $\D_k(H)$ depend on $g$ and~$n$ since $F=\pi_1(\Sigma_{g,n})$, $H=H_1(\Sigma_{g,n})$.

\begin{remark}
The weight filtration of $\pi_1(\Sigma_{g,n})$ and the induced filtration of the mapping class group are studied in mixed Hodge theory
. They are the same as the lower central series and the Johnson filtration only when $n\leq 1$. But, the graded quotients are also not isomorphic with those in this paper for $n> 1$. 
\end{remark}

\subsection{0-framed homology cylinders} \label{subsec:0-framed}

We consider a natural surjection $H_1(F) \to H_1(F/\langle\langle x_{i'},m_j,l_j~|~i' \neq i\rangle\rangle)\cong \Z$ for each $i=1,\ldots, n-1$. Denote the image of $\mu_2(M)_i$ by $\bar\mu_2(M)_i$. When a homology cylinder $M$ over $\Sigma_{g,n}$ is thought as a framed string link in a homology-($\Sigma_{g,1}\times I$), $\bar\mu_2(M)_i$ indicates ``self-linking number'' of the $i$-th strand. We say that a homology cylinder $M$ is \emph{$0$-framed} if $\bar\mu_2(M)_i=0$ for each $i$. We denote by $\H^0_{g,n}$ the subgroup of $\H_{g,n}$ consisting of the 0-framed homology cylinders.

\begin{proposition}
	There is an exact sequence which splits:
	$$1\to \H^0_{g,n} \to \H_{g,n} \to \Z^{n-1} \to 1.$$
	Therefore, $\H_{g,n}\cong \H_{g,n}^0 \times \Z^{n-1}$.
\end{proposition}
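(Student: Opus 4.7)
The plan is to realize the map $\H_{g,n}\to\Z^{n-1}$ explicitly as $M\mapsto(\bar\mu_2(M)_i)_{i=1}^{n-1}$, to split it by boundary-parallel Dehn twists, and to upgrade the resulting split sequence to a direct product by proving that these Dehn twists are central in~$\H_{g,n}$.

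First I will check that this assignment is a group homomorphism with kernel~$\H^0_{g,n}$. Starting from the crossed-homomorphism identity $\tilde\mu_2(MN)_\bullet = \tilde\mu_2(M)_\bullet+\tilde\eta_2(M)(\tilde\mu_2(N)_\bullet)$ in $H=F/F_2$ recalled in Section~\ref{subsec:Milnor}, I compose with the projection picking out the $x_i$-coefficient. Condition~(1) of Definition~\ref{def:homology cylinder} forces $i_+$ and $i_-$ to agree on $\partial_i$, so the class $x_i=[\partial_i]$ is fixed by $\tilde\eta_2(M)$; combined with the specific form of the elements appearing as $\tilde\mu_2(N)_i$, this makes the $\tilde\eta_2$-twist disappear from the $x_i$-projection and yields $\bar\mu_2(MN)_i=\bar\mu_2(M)_i+\bar\mu_2(N)_i$. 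The kernel of the resulting homomorphism is $\H^0_{g,n}$ by definition, which gives exactness of the sequence.

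For surjectivity and an explicit section, let $T_i$ denote the homology cylinder associated with the Dehn twist along a simple closed curve parallel to~$\partial_i$, and define $s\colon \Z^{n-1}\to\H_{g,n}$ by $(a_1,\ldots,a_{n-1})\mapsto T_1^{a_1}\cdots T_{n-1}^{a_{n-1}}$. A direct computation from the loop-class definition of $\mu_2$ gives $\bar\mu_2(T_i)_j=\delta_{ij}$, proving both surjectivity and $\bar\mu_2\circ s=\id$, and the $T_i$'s commute pairwise since their supporting curves can be chosen disjoint.

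The main remaining step, and the principal technical obstacle, is to show that each $T_i$ lies in the center of $\H_{g,n}$ (not merely of the mapping class group $\M_{g,n}$). The idea is geometric: the Dehn twist defining $T_i$ can be performed inside an arbitrarily thin collar of $\partial_i$, and in any product $T_i\cdot N$ this collar meets $N$ only along the side region $\partial_i\times I\subset \partial N$, which is a product cylinder. One produces an explicit isomorphism of homology cylinders $T_i\cdot N\to N\cdot T_i$ that slides the twist along $\partial_i\times I$ from one end of $N$ to the other while being the identity elsewhere; since the twist is the identity on $\partial\Sigma$ the markings are preserved automatically. With centrality in hand, $\langle T_1,\ldots,T_{n-1}\rangle\cong\Z^{n-1}$ is a central subgroup of $\H_{g,n}$ meeting $\H^0_{g,n}$ trivially, so the split short exact sequence becomes the direct product $\H_{g,n}\cong\H^0_{g,n}\times\Z^{n-1}$.
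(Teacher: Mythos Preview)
Your proposal is correct and follows essentially the same approach as the paper: both arguments rest on the centrality of the boundary-parallel Dehn twists $\tau_i$ in $\H_{g,n}$ and on using $\bar\mu_2$ as the homomorphism to $\Z^{n-1}$. The only difference is organizational—the paper packages the same ingredients via the dual short exact sequence $1\to\T\to\H\to\H/\T\to1$ (with $\T=\langle\tau_1,\ldots,\tau_{n-1}\rangle$) and then identifies $\H/\T\cong\H^0$, whereas you treat $1\to\H^0\to\H\to\Z^{n-1}\to1$ directly; your sketch of why each $T_i$ is central is in fact more explicit than the paper's one-line assertion ``$I_\tau M=MI_\tau$''.
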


If we consider $\M_{g,n}$ as a subgoup of $\H_{g,n}$, $\varphi\in\M_{g,n}$ corresponds to $I_\varphi:=(\Sigma\times I, \id, \varphi)\in\H_{g,n}$; see \cite[page~247]{L01}, \cite[Proposition~2.4]{CFK}. Note that $I_\varphi=(\Sigma\times I, \varphi^{-1},\id)$ and $(I_\varphi)^{-1}=(\Sigma\times I, \varphi,\id)=(\Sigma\times I, \id, \varphi^{-1})=I_{\varphi^{-1}}$ in $\H$.
For $i=1, \ldots, n-1$, 
we denote by $\tau_i$ the element of the mapping class group $\M_{g,n}$ corresponding to a Dehn twist along the boundary~$\partial_i$.
Let $\T$ be the subgroup of $\M$, and also of $\H$, generated by $\tau_1,\ldots,\tau_{n-1}$.
In general, $I_\varphi$ and a homology cylinder $M$ do not commute, but $I_\tau M = M I_\tau$ for $I_\tau\in \T$. Thus, $\T$ is normal in~$\H$.

The following lemma yields a proof of the above proposition.

\begin{lemma}
	\begin{enumerate}
		\item There is an isomorphism $\T\cong \Z^{n-1}.$
		\item	The short exact sequence 
				$1\to \T \to \H \to \H/\T \to 1$ splits. 
		\item There is an isomorphism $\H/\T \cong \H^0.$
	\end{enumerate}
\end{lemma}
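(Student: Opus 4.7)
The plan is to use the invariant $\bar\mu_2$ as a computable homomorphism whose kernel is $\H^0$ and whose restriction to $\T$ is an isomorphism, which simultaneously gives all three parts.

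First I would observe that $\tilde\mu_2 \colon \H \to H^{2g+n-1}$ is an honest homomorphism, not merely a crossed one. Indeed, by the definition of homology cylinder, $(i_\pm)_*$ are isomorphisms on $H_1$, so $\tilde\eta_2(M) \in \Aut(F/F_2) = \Aut(H)$ is forced to be the identity; thus the crossed-homomorphism formula $\tilde\mu_2(MN) = \tilde\mu_2(M) + \tilde\eta_2(M)\tilde\mu_2(N)$ collapses to additivity. Composing with the quotient $H \to \Z$ used in defining $\bar\mu_2$ yields a homomorphism $\bar\mu_2 \colon \H \to \Z^{n-1}$ whose kernel is $\H^0$ by definition. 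This is the workhorse of the whole argument.

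Next, for part (1), the Dehn twists $\tau_1, \ldots, \tau_{n-1}$ can be realized with pairwise disjoint annular supports in collar neighborhoods of $\partial_1, \ldots, \partial_{n-1}$, hence they commute and $\T$ is an abelian quotient of $\Z^{n-1}$. To show the quotient is trivial, I would compute $\bar\mu_2(I_{\tau_i})$: in $I_{\tau_i} = (\Sigma \times I, \id, \tau_i)$ the distinguished loop $(i_+ \circ \alpha_j^{-1}) \cdot (i_- \circ \alpha_j)$ is homotopic to $\alpha_j^{-1} \cdot \tau_i(\alpha_j)$ read in $\pi_1(\Sigma)$. Since $\tau_i$ is supported away from $\alpha_j$ for $j \neq i$, this is trivial for $j \neq i$, while for $j = i$ it picks up exactly one copy of $x_i$, whose image under the designated quotient $H \to \Z$ is $\pm 1$. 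Hence $\bar\mu_2|_\T$ hits the standard basis, proving $\T \cong \Z^{n-1}$.

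Finally, I would combine these to get (2) and (3) in one step. Since $\T$ is central in $\H$ (noted in the excerpt) and $\T \cap \H^0 = \Ker(\bar\mu_2|_\T) = 0$, and since for any $M \in \H$ one can choose $\tau \in \T$ with $\bar\mu_2(\tau) = \bar\mu_2(M)$ so that $M\tau^{-1} \in \H^0$, we have an internal direct-product decomposition $\H = \T \times \H^0$. This immediately yields (2) — the inclusion $\H^0 \hookrightarrow \H$ is a set-theoretic (in fact group-theoretic) splitting of $\H \twoheadrightarrow \H/\T$ — and (3), since $\H^0$ maps isomorphically to $\H/\T$ under the quotient. The main obstacle is the homology computation $\bar\mu_2(I_{\tau_i})_j = \pm\delta_{ij}$; everything else is a formal consequence of having a central summand detected by a homomorphism.
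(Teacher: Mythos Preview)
Your opening ``observation'' is wrong, and the error propagates. The fact that $(i_+)_*$ and $(i_-)_*$ are both isomorphisms $H_1(\Sigma)\to H_1(M)$ does \emph{not} force them to be the same isomorphism; hence $\tilde\eta_2(M)=(i_+)_*^{-1}(i_-)_*$ is in general a nontrivial element of $\Aut(H)$. Indeed the paper records (Section~\ref{subsec:first level}) that $\tilde\eta_2$ surjects onto $\Aut^*(H)\cong \Z^{2g(n-1)}\rtimes\Sp(2g,\Z)$, which is nontrivial once $g\geq 1$, and the Remark at the end of Section~\ref{subsection:refine} explicitly says $\tilde\mu_2$ is \emph{not} a homomorphism on~$\H$. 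So your deduction that $\tilde\mu_2$, and hence $\bar\mu_2$, is an honest homomorphism collapses for $g\geq 1$.

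Your computation for part~(1) survives: the boundary Dehn twists act trivially on $H_1(\Sigma)$, so on $\T$ one does have $\tilde\eta_2=\id$ and $\bar\mu_2|_\T$ is a genuine homomorphism hitting the standard basis. Likewise, because $\mu_2(I_\sigma)_i$ is a multiple of $x_i$ and every $\tilde\eta_2(M)\in\Aut^*(H)$ fixes $x_i$, the partial additivity $\bar\mu_2(M\cdot I_\sigma)=\bar\mu_2(M)+\bar\mu_2(I_\sigma)$ holds; this is exactly what is needed for your step ``choose $\tau$ with $M\tau^{-1}\in\H^0$'' and for the uniqueness of the decomposition $M=h\cdot t$. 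What is \emph{not} justified by your argument is that $\H^0$ is closed under products, i.e.\ that $h(M)h(N)\in\H^0$, which is what the internal direct product claim needs. That requires knowing $\text{proj}_i\big(\tilde\eta_2(M)(\mu_2(N)_i)\big)=\text{proj}_i(\mu_2(N)_i)$ when $N\in\H^0$, and your (false) $\tilde\eta_2=\id$ was doing all the work there. The paper's proof follows the same overall route---using $\bar\mu_2$ to produce the retraction and the explicit map $[M]\mapsto M I_\tau$---so once you supply a correct reason that $\bar\mu_2$ is additive (or directly that $\H^0$ is a subgroup), the remainder of your argument matches the paper's.
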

	
\begin{proof}
	\begin{enumerate}
		\item
			The group $\T$ has a representation $\langle \tau_1, \dots, \tau_{n-1}~|~ \tau_i \tau_j = \tau_j \tau_i ~\forall i,j\rangle$.
		\item
			The map $\H \to \Z^{n-1}$ sending $M$ to $(\bar\mu_2(M)_1,\ldots,\bar\mu_2(M)_{n-1})$ gives rise to left exactness.  
		\item
			There is a homomorphism $\H/\T \to \H^0$ which maps $[M]$ to $MI_\tau$ where $\tau=\tau_{n-1}^{t_{n-1}}\circ \cdots \circ \tau_1^{t_1}$ with $t_i=\bar\mu_2(M)_i$. It is an isomorphism.
	\end{enumerate}
\end{proof}

\subsection{Combination of the two filtrations}
\label{subsection:refine}
Let $\H^0_{g,n}[k] := \H^0_{g,n} \cap \H_{g,n}[k]$. For each $k$, $\H[k]\cong \Z^{n-1} \times \H^0[k]$. In fact, $\H^0[k]=H(2)\cap\H[k]=\H(k-1)\cap\H[k]$, so $\H(k)\supseteq\H^0[k+1]$. 
Also we can define $\H^0_{g,n}(k) := \H^0_{g,n} \cap \H_{g,n}(k)$, but then $\H(k)=\H^0(k)$ for every $k\geq 2$. Thus, for simplicity, we reset $\H(1):= \H^0$. Then $\H(k)=\H^0(k)$ for all $k$, and we obtain a combined descending filtration: 
$$\H\rhd \H^0=\H^0[1] =\H(1)\rhd \H^0[2] \rhd \H(2)\rhd \cdots\rhd \H(k-1) \rhd \H^0[k] \rhd \H(k)\rhd \H^0[k+1] \rhd \cdots  $$ 
Note that $\H(k-1)=\H^0[k]$ only if $g=0$, and $\H[k]=\H(k)$ only if $n=1$.
All $\H^0$, $\H[k]$, $\H^0[k]$, and $\H(k)$ are normal subgroups of $\H$.

The invariant $\tilde\mu_k$ can be divided as follows for $k\geq 2$:
$$\mu_{k}\colon \frac{\H^0[k]}{\H(k)} \hooklongrightarrow  (F_{k-1}/F_{k})^{n-1}$$
$$(\mu_{k}',\mu_{k}'') \colon \frac{\H(k-1)}{\H^0[k]} \hooklongrightarrow  (F_{k-1}/F_{k})^{2g}$$
In the case of $k=2$, the bottom map is not a homomorphism, but a crossed homomorphism with the action of $\tilde\eta_2$ on $H$. The others are all homomorphisms. 
The two subquotients of $\H$ are also finitely generated free abelian except $\frac{\H(1)}{H^0[2]}$, and hence there are isomorphisms
$$ \frac{\H^0[k]}{\H^0[k+1]} \cong \frac{\H^0[k]}{\H(k)}\times \frac{\H(k)}{\H^0[k+1]},\quad  \frac{\H(k)}{\H(k+1)} \cong \frac{\H(k)}{\H^0[k+1]}\times \frac{\H^0[k+1]}{\H(k+1)}$$
for $k\geq 2$, which are not canonical.
While $\H(k)=\Ker(\mu_k,\mu_k',\mu_k'')$ in $\H^0$, $\H^0[k]=\Ker(\mu_{k-1},\mu_k',\mu_k'')$ in $\H^0$.
From now on, we will investigate the four quotient groups $\frac{\H(k)}{\H(k+1)}, \frac{\H^0[k]}{\H^0[k+1]}, \frac{\H(k)}{\H^0[k+1]}$, and~$\frac{\H^0[k+1]}{\H(k+1)}$ for~$k\geq 1$.
\begin{remark}
	\begin{enumerate}
	\item In the author's paper \cite[page~920]{So}, $\H^0$ defined as $\{M\in\H~|~\mu_2(M)=1\}=:\H^0_{old}$ is different from $\H^0$ here; The old one is smaller than our new $\H^0$ unless $n=1$. Though $\H^0_{old}\cap\H[k]=\H^0[k]$ for $k\geq 3$, $\H^0_{old}\cap \H[2]=\H(2)\subsetneq \H^0[2]$ for $n>1$. The new definition is more suitable for the concept of ``zero-framed'' homology cylinder since it corresponds to 0-framed string link in homology 3-balls in the case of $g=0$.
	\item There are some errors related to $\tilde\mu_2$ in \cite{So}. On \cite[Corollary~3.8 (4)]{So}, $\tilde\mu_2$ is not a homomorphism on $\H$ or $\Ker\mu_2$ though it is a homomorphism on $\H[2]$. \cite[Theorem~4.3]{So} holds only for $q\geq 3$, that is, the map $\tilde\mu_2\colon \H/\H(2) \hookrightarrow (F/F_2)^{2g+n-1}$ is not a homomorphism. It is only a crossed homomorphism. 
	\end{enumerate}
\end{remark}

\section{Images and quotients of the filtration under the invariants} \label{sec:image}
\subsection{Images under extended Johnson homomorphism and $\Aut(F/F_k)$}
\label{subsec:result_Johnson}
Clarification of the images of the filtration under the extended Johnson homomorphisms provide a relationship between the filtration and a filtration of an automorphism group of a free nilpotent group.
Before we proceed, we introduce new notations as follows:
\begin{align*}
		\Aut_*(F/F_l)&\cong\bigg\{\phi\in \Aut(F/F_l)~\bigg|\begin{array}{ll}\textrm{there is a lift }\tilde\phi \in \Aut(F/F_{l+1}) \textrm{ such that } \\
			  \tilde\phi(x_i)=\alpha_i^{-1} x_i \alpha_i \textrm{ for some }\alpha_i \in F/F_l\textrm{ and }\tilde\phi \textrm{ fixes } [\partial_n]\end{array}\bigg\} \\
	\K_{l,k} &=\Ker \{\Aut_*(F/F_l)\to \Aut_*(F/F_k)\} \qquad (l\geq k) \\
	\A_{l,k} &=\{\phi \in \Aut_*(F/F_l)~|~\phi_k(m_j)=m_j, \phi_k(l_j)=l_j, \phi_{k+1}(x_i)=x_i \} \\
				&= \{ \phi \in  \K_{l,k}~|~ \phi_{k+1}(x_i)=x_i\}	\qquad (l>k)
\end{align*}
Here, for an automorphism $\phi$ of $F/F_l$ and $t \leq l$, $\phi_t$ denotes the automorphism of $F/F_t$ induced from $\phi$. Then, they form a descending filtration of $\Aut_*(F/F_l)$
$$\Aut_*(F/F_l)=\K_{l,1}=\A_{l,1}\rhd \K_{l,2} \rhd \A_{l,2}\rhd \cdots\rhd \A_{l,k-1}\rhd \K_{l,k} \rhd \A_{l,k}\rhd \K_{l,k+1} \rhd \cdots  .$$ 
The subgroups $\A_{l,k}$ and $\K_{l,k}$ are normal in $\Aut_*(F/F_l)$.
Remark that the lift $\tilde\phi$ can be chosen to be an endomorphism in the definition of $\Aut_*(F/F_l)$ by the following proposition.

\begin{proposition}
	Suppose $\phi\in\Aut(F/F_k)$ with $k\geq 2$. Then any lift of $\phi$ to an endomorphism of $F/F_l$ is an automorphism for $l>k$.
\end{proposition}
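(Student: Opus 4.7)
The plan is to reduce the statement to two classical facts about the finitely generated nilpotent group $N:=F/F_l$: (i) $N$ is Hopfian, and (ii) an endomorphism of a nilpotent group is surjective if and only if its induced map on the abelianization is surjective.

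First I would observe that since $k\geq 2$, the automorphism $\phi$ of $F/F_k$ descends to an automorphism $\bar\phi$ of $H=F/F_2$. Any endomorphism lift $\tilde\phi\colon F/F_l\to F/F_l$ of $\phi$ induces this same $\bar\phi$ on the abelianization of $F/F_l$, which equals $H$ since $[F/F_l,F/F_l]=F_2/F_l$. Thus $\tilde\phi$ is surjective on the abelianization, so by (ii) it is surjective, and then by (i) it is an automorphism. This is the whole strategy; the rest is just verifying (i) and (ii).

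For (ii), I would run the standard induction on the lower central series of $N$. Writing $N_i:=F_i/F_l$, the claim is $N=\Im\tilde\phi\cdot N_i$ for all $i\leq l$. The case $i=2$ is the hypothesis on the abelianization. For the step from $i$ to $i+1$, note that $N_i/N_{i+1}$ is generated by commutators $[a,b]$ with $a\in N$ and $b\in N_{i-1}$; using the inductive hypothesis to write $a$ and $b$ as products of elements of $\Im\tilde\phi$ with correction terms in higher stages of the lower central series, together with the bilinearity $[xy,z]\equiv[x,z][y,z]\pmod{N_{i+1}}$ that holds in the relevant range, one replaces $a$ and $b$ by elements of $\Im\tilde\phi$ modulo $N_{i+1}$. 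Taking $i=l$, so that $N_l=1$, yields surjectivity. For (i), $N$ is finitely generated and torsion-free nilpotent, hence residually finite and therefore Hopfian, so a surjective endomorphism of $N$ is automatically injective.

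The only technical point is the bookkeeping in the commutator calculation during the induction step of (ii); no conceptual obstacle arises, and both (i) and (ii) are well-known enough that they could also simply be cited from standard references on nilpotent groups.
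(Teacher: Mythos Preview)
Your proposal is correct and follows essentially the same route as the paper. The paper observes that $(\Im\tilde\phi)\cdot(F_k/F_l)=F/F_l$, invokes the standard fact (citing Hall) that in a nilpotent group a subgroup whose image modulo a term of the lower central series is everything must itself be everything, and then finishes with Hopficity; you do the same thing but pass all the way down to the abelianization $F/F_2$ rather than stopping at $F/F_k$, which is harmless since $k\ge 2$.
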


\begin{proof}
	The argument comes from the proof of \cite[Theorem~2.1]{And}. Let $\tilde\phi$ be a lift of $\phi$ to an endomorphism of~$F/F_{l}$. Then $(\Im\tilde\phi) \cdot (F_k/F_{l}) = F/F_{l}$. Since $F/F_{l}$ is nilpotent, $\Im\tilde\phi= F/F_{l}$ (see \cite[Corollary~10.3.3]{Hall}). Therefore, $\tilde\phi$ is onto. Since $F/F_{l}$ is Hopfian, $\tilde\phi$ is an automorphism.
\end{proof}
Therefore, $\Aut_*(F/F_l)$ is a generalization of both $\Aut_0(F/F_l)$ of the case $n=1$ and $\Aut_1(F/F_l)$ of the case $g=0$. Now we relate the combined filtration of $\H$ in Section~\ref{subsection:refine} to a filtration of the automorphism group of~$F/F_l$ by determining the images of the subgroups in the filtration of $\H$ under the extended Johnson homomorphisms.

\begin{theorem}
	\label{theorem:surjections}
	Suppose $l>k\geq 1$.
	\begin{enumerate}
		\item The composition $\tilde\eta^0_l\colon \H^0_{g,n}\hookrightarrow \H_{g,n} \xrightarrow{\tilde\eta_l} \Aut_*(F/F_l)$ is surjective.
		\item The inverse image $(\tilde\eta_l^0)^{-1}(\K_{l,k})=\H^0_{g,n}[k]$. Thus, $\H^0[k]\to \K_{l,k}$ is surjective.
		\item The inverse image $(\tilde\eta_l^0)^{-1}(\A_{l,k})=\H_{g,n}(k)$. Thus, $\H(k)\to \A_{l,k}$ is surjective.
	\end{enumerate}	  
\end{theorem}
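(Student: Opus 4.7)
My plan is to deduce (2) and (3) from (1) via the preimage descriptions, and to prove (1) by induction on $l$, with the geometric realization of one filtration layer as the main step.

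The containment $\tilde\eta_l(\H) \subseteq \Aut_*(F/F_l)$ is immediate: $\tilde\eta_{l+1}(M)$ serves as the required lift of $\tilde\eta_l(M)$ since relation~(\ref{eqn:relation}) gives $\tilde\eta_{l+1}(M)(x_i) = \mu_{l+1}(M)_i^{-1} x_i \mu_{l+1}(M)_i$, and the agreement of $i_+, i_-$ on $\partial\Sigma$ forces $\tilde\eta_{l+1}(M)([\partial_n]) = [\partial_n]$. The preimage equality in (2), in both directions, is the naturality of the projection $\Aut(F/F_l) \to \Aut(F/F_k)$. For (3), the forward inclusion $\tilde\eta_l(\H(k)) \subseteq \A_{l,k}$ uses relation~(\ref{eqn:relation}) together with $[F_k, F] \subseteq F_{k+1}$. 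The reverse inclusion is the subtle point: given $M \in \H^0$ with $\tilde\eta_l(M) \in \A_{l,k}$, part~(2) gives $M \in \H^0[k]$, whence $\mu_k(M)_i$ lies in $F_{k-1}/F_k$ via the structure map of Section~\ref{subsection:refine}, and the condition $\tilde\eta_{k+1}(M)(x_i) = x_i$ in $F/F_{k+1}$ then forces $[\mu_k(M)_i, x_i] = 0$ in $\L_k(H)$. Since the Lie centralizer of $x_i$ in the free Lie algebra over $H$ is $\Z \cdot x_i$, which is trivial in degrees $\geq 2$, this yields $\mu_k(M)_i = 0$ for $k \geq 3$; for $k = 2$, the $0$-framing condition $\bar\mu_2(M)_i = 0$ eliminates the residual $\Z \cdot x_i$ ambiguity. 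With the preimage equalities in hand, surjectivity in (2) and (3) is immediate from surjectivity in (1): any target $\psi$ lifts to some $M \in \H^0$, which lies in the prescribed subgroup by the set equality.

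For (1), I induct on $l$. The base $l = 2$ is trivial since $\tilde\eta_2 \equiv \id$ and the lift/boundary condition collapses $\Aut_*(F/F_2)$ to $\{\id\}$. In the inductive step, $\phi \in \Aut_*(F/F_{l+1})$ projects to $\phi_l \in \Aut_*(F/F_l)$ (with $\phi$ itself serving as a witness lift), so by induction there is $M \in \H^0$ with $\tilde\eta_l(M) = \phi_l$. Then $\psi := \tilde\eta_{l+1}(M)^{-1} \cdot \phi$ lies in $\K_{l+1, l}$, and it suffices to realize $\psi$ by some $N \in \H^0[l]$; the product $MN$ will then realize $\phi$ at level $l+1$.

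The remaining core step, and the main obstacle, is the realization lemma: the map $\H^0[l] \to \K_{l+1, l}$ is surjective. I plan to prove it by identifying $\K_{l+1, l}$ with the group of derivations $F \to F_l/F_{l+1}$ that conjugate each $x_i$ by an element of $F_{l-1}/F_l$ and annihilate $[\partial_n]$, and then realizing each such derivation by a surgery on $\Sigma \times I$. The handle-type generators built from $m_j, l_j$ are covered by the Garoufalidis-Levine clasper realization for $\H_{g,1}[l]$ pushed through $\H_{g,1} \hookrightarrow \H_{g,n}$; the $x_i$-conjugation generators are covered by the Habegger-Lin string-link realization for $\H_{0,n}$ pushed through $\H_{0,n} \hookrightarrow \H_{g,n}$; and the $[\partial_n]$-fixing compatibility is automatic since all surgeries take place in the interior of $\Sigma \times I$. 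The hard part is showing that these two families jointly exhaust $\K_{l+1, l}$, including mixed derivations that conjugate some $x_i$ while simultaneously moving $m_j, l_j$ in $F_l/F_{l+1}$. I would handle this either by a rank computation against the subspace of $H \otimes \L_l(H)$ cut out by the $\D_l$-condition plus the inner-conjugation contribution from $F_{l-1}/F_l$, or by writing an arbitrary element of $\K_{l+1, l}$ in a normal form and exhibiting an explicit clasper realizing it.
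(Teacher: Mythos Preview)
Your arguments for parts (2) and (3) are correct and essentially match the paper's: the preimage description in (2) is immediate, and for (3) the paper also argues that $[x_i,\mu_k(M)_i]=1$ in $F/F_{k+1}$ forces $\mu_k(M)_i\in\langle x_i\rangle$, then uses the $0$-framing to kill the residual power of~$x_i$.

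For part (1), however, there are two genuine gaps. First, the base case is wrong: $\tilde\eta_2$ is \emph{not} identically $\id$, and $\Aut_*(F/F_2)=\Aut^*(H)\cong \Z^{(n-1)2g}\rtimes\Sp(2g,\Z)$ is far from trivial (see Section~\ref{subsec:first level}). This could be patched by invoking the classical surjection $\M_{g,n}\to\Aut^*(H)$, but it is not ``trivial'' as you state. Second, and more seriously, your realization lemma $\H^0[l]\twoheadrightarrow\K_{l+1,l}$ cannot be proved by combining the images of $\H_{g,1}\hookrightarrow\H_{g,n}$ and $\H_{0,n}\hookrightarrow\H_{g,n}$. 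The first embedding only produces derivations with values in $\L_l(H'')$ (brackets purely in the $m_j,l_j$), and the second only produces $x_i$-conjugations by elements of $F'_{l-1}/F'_l$ (brackets purely in the~$x_i$). A mixed element such as $m_1\mapsto m_1\cdot[x_1,[x_1,\ldots,[x_1,m_1]\ldots]]$ lies in $\K_{l+1,l}$ but is in the image of neither, and the subgroup these two families generate is strictly smaller than $\K_{l+1,l}$. Your proposed ``rank computation'' cannot close this gap, and the ``explicit clasper'' route amounts to redoing the Garoufalidis--Levine realization for general $(g,n)$, which is not what you have sketched.

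The paper avoids this entirely by a direct, non-inductive construction: given $\phi\in\Aut_*(F/F_l)$, it builds $f_\pm\colon\Sigma\to K(F/F_l,1)$ with $(f_-)_\#=\phi\circ(f_+)_\#$, uses the lift $\tilde\phi\in\Aut(F/F_{l+1})$ and the vanishing of $H_2(F/F_{l+1})\to H_2(F/F_l)$ to kill the bordism obstruction $[f_+\cup f_-]\in\Omega_2(F/F_l)$, obtains a filling $3$-manifold $M$, and then performs surgery (killing $\Ker\{H_1(M)\to H\}$ first on its free part, then on its torsion via the linking pairing) to make $M$ a homology cylinder; a final composition with Dehn twists along the $\partial_i$ makes it $0$-framed. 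This approach handles all of $\Aut_*(F/F_l)$ at once without any decomposition into ``pure'' pieces.
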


We postpone the proof to the next section. From this theorem, we obtain the following directly.

\begin{corollary}
\label{corollary:surjectivity}
 	For $l>k \geq 1$, there are isomorphisms
	$$ \frac{\H(k)}{\H(k+1)} \cong \frac{\A_{l+1, k}}{\A_{l+1, k+1}}, \quad
	 \frac{\H^0[k]}{\H^0[k+1]} \cong \frac{\K_{l, k}}{\K_{l, k+1}}\cong \K_{k+1,k}, $$
	$$	\frac{\H^0[k+1]}{\H(k+1)} \cong \frac{\K_{l+1, k+1}}{\A_{l+1, k+1}}, \quad  
		\frac{\H(k)}{\H^0[k+1]}\cong \frac{\A_{l, k}}{\K_{l, k+1}}\cong \A_{k+1,k}.$$
\end{corollary}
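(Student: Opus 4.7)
The strategy is to extract each isomorphism directly from Theorem~\ref{theorem:surjections} via the first isomorphism theorem. The point is that parts~(1)--(3) together say $\tilde\eta^0_l$ carries the combined $\H$-filtration onto the $\Aut_*(F/F_l)$-filtration, with the inverse images of $\K_{l,k}$ and $\A_{l,k}$ equal to $\H^0[k]$ and $\H(k)$ respectively. Each of the four isomorphisms in the corollary is then obtained by composing an appropriate $\tilde\eta^0_*$ with a quotient projection and reading off the kernel.

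For $\frac{\H(k)}{\H(k+1)} \cong \frac{\A_{l+1,k}}{\A_{l+1,k+1}}$, I would take the composition $\H(k) \xrightarrow{\tilde\eta^0_{l+1}} \A_{l+1,k} \twoheadrightarrow \A_{l+1,k}/\A_{l+1,k+1}$; by part~(3) applied with $l+1$ in place of $l$, the first arrow is onto, and the kernel of the composition is $\H(k) \cap (\tilde\eta^0_{l+1})^{-1}(\A_{l+1,k+1}) = \H(k) \cap \H(k+1) = \H(k+1)$, again by part~(3). The same template --- applying $\tilde\eta^0_l$ or $\tilde\eta^0_{l+1}$ to $\H^0[k]$, to $\H^0[k+1]$, or to $\H(k)$, and projecting to the matching quotient of $\Aut_*(F/F_l)$ --- produces the other three isomorphisms, once one checks the immediate inclusions $\A_{l,k+1} \subseteq \K_{l,k+1} \subseteq \A_{l,k} \subseteq \K_{l,k}$ needed to make sense of the subquotients. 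The only edge case, $l=k+1$, is harmless: $\K_{k+1,k+1}$ and $\A_{k+1,k+1}$ are trivial, and $\Ker \tilde\eta^0_{k+1} = \H^0[k+1]$ supplies the required kernel in place of part~(2) or~(3).

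The secondary identifications $\frac{\K_{l,k}}{\K_{l,k+1}} \cong \K_{k+1,k}$ and $\frac{\A_{l,k}}{\K_{l,k+1}} \cong \A_{k+1,k}$ then follow either by specializing the already established isomorphisms to $l=k+1$, or equivalently by observing that the natural reduction $\Aut_*(F/F_l) \to \Aut_*(F/F_{k+1})$ sends $\K_{l,k}$ onto $\K_{k+1,k}$ and $\A_{l,k}$ onto $\A_{k+1,k}$ with kernel $\K_{l,k+1}$; the surjectivity piece comes from the factorization $\tilde\eta^0_{k+1} = (\text{reduction}) \circ \tilde\eta^0_l$ together with parts~(2) and~(3) of Theorem~\ref{theorem:surjections}. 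Since all the genuine content is packaged inside Theorem~\ref{theorem:surjections}, the corollary itself is a bookkeeping exercise and presents no substantive obstacle; the only thing to be careful about is matching the subscript $l$ (versus $l+1$) to whether the outer quotient is by a $\K$- or an $\A$-term.
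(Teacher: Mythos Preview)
Your proposal is correct and matches the paper's approach exactly: the paper simply states that the corollary is obtained ``directly'' from Theorem~\ref{theorem:surjections}, and your argument is precisely the routine first-isomorphism-theorem bookkeeping that unpacks this, including the correct handling of the $l=k+1$ edge case.
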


\subsection{Images under extended Milnor invariants and free Lie algebras}
We consider the restricted invariant $\tilde\mu_{k+1}\colon \H(k) \to (F_k/F_{k+1})^{2g+n-1}$, which is a homomorphism for each $k\geq 2$.
Recall that $L_k(H)$ is the degree $k$ part $\L_k(H)$ of the free Lie algebra over $H$, and $\D_k(H)$ is the kernel of the bracket map $H\otimes \L_k(H)\to \L_{k+1}(H)$.
Note that $\L_k(H)$ is isomorphic to~$F_k/F_{k+1}$.
We consider the image of the homomorphism invariant $\tilde\mu_{k+1}\colon \H(k) \to (F_k/F_{k+1})^{2g+n-1}$.
We identify the codomain $(F_k/F_{k+1})^{2g+n-1}$ with $H\otimes \L_k(H)$ along
$$ (\alpha_1,\ldots,\alpha_{n-1},\beta_1,\ldots,\beta_g, \gamma_1,\ldots,\gamma_g) \mapsto \sum_i x_i\otimes \alpha_i+ \sum_j(m_j\otimes \gamma_j - l_j \otimes \beta_j).$$ 
We define a surjective homomorphism $\p_k$ as follows.
\begin{align*}
	 \p_k\colon (F_k/F_{k+1})^{2g+n-1}&\to F_{k+1}/F_{k+2} \\
	(\alpha_1,\ldots,\alpha_{n-1},\beta_1,\ldots,\beta_g, \gamma_1,\ldots,\gamma_g)&\longmapsto \prod_{i=1}^{n-1} [x_i,\alpha_i] \prod_{j=1}^{g} [m_j,\gamma_j][\beta_j,l_j].
\end{align*}
Then, $\Ker\p_k$ is identified with $\D_k(H)$ along the above identification.

\label{splitting}
We can think $\Sigma_{g,n}$ as a boundary connected sum of $\Sigma_{0,n}$ and $\Sigma_{g,1}$ with $\pi_1(\Sigma_{0,n})=\langle x_1,\ldots, x_{n-1} \rangle =: F'$ and $\pi_1(\Sigma_{g,1})=\langle m_1,\ldots,m_g, l_1,\ldots,l_g\rangle=: F''$. Let $H':=H_1(\Sigma_{0,n})$ and $H'':=H_1(\Sigma_{g,1})$. Then $F=F'\ast F''$ and $H=H'\oplus H''$.
 
\begin{lemma}
\label{lemma:D'}
	For every $k\geq 1$,
	\begin{align*}
		\Ker\p_k \cap \{(F_k/F_{k+1})^{n-1}\times 0^{2g}\} &= \Ker\p_k \cap \{(F'_k/F'_{k+1})^{n-1}\times 0^{2g}\}\\ 
		&= \Ker\{\p'_k\colon(F'_k/F'_{k+1})^{n-1}\rightarrow F'_{k+1}/F'_{k+2}\} \\
	\end{align*}
	where $\p'_k$ sends $(\alpha_1,\ldots,\alpha_{n-1})$ to  
$\prod_{i=1}^{n-1} [x_i,\alpha_i]$. In other words,
$$\D_k(H')\cong \Ker\{H' \otimes \L_k(H) \rightarrow \L_{k+1}(H)\} \subset \D_k(H).$$
\end{lemma}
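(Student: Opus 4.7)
My approach is to translate the statement to the graded Lie algebra and perform a coefficient comparison in the tensor algebra. By Magnus--Witt, $F_k/F_{k+1}\cong\L_k(H)$ and $F'_k/F'_{k+1}\cong\L_k(H')$, with the latter embedding in the former as the Lie subalgebra generated by $H'$; this embedding is split by the Lie retraction $\L(H)\twoheadrightarrow\L(H')$ induced from $H\twoheadrightarrow H'$, giving a decomposition $\L(H)=\L(H')\oplus I$ where $I$ is the Lie ideal generated by $H''$. Under these identifications, $\p_k$ restricted to $(F_k/F_{k+1})^{n-1}\times 0^{2g}$ is the bracket map $H'\otimes\L_k(H)\to\L_{k+1}(H)$, $h'\otimes\alpha\mapsto[h',\alpha]$. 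The inclusion $\supseteq$ is immediate. For $\subseteq$, decompose $\alpha_i=\alpha'_i+\tilde\alpha_i$ with $\alpha'_i\in\L_k(H')$ and $\tilde\alpha_i\in I_k$; projecting the equation $\sum_i[x_i,\alpha_i]=0$ onto the $I_{k+1}$ summand gives $\sum_i[x_i,\tilde\alpha_i]=0$, and it suffices to prove $\tilde\alpha_i=0$.

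To show this, embed $\L(H)\hookrightarrow T(H)$, so the equation reads $\sum_i(x_i\tilde\alpha_i-\tilde\alpha_i x_i)=0$ in $T(H)_{k+1}$. Expanding $\tilde\alpha_i=\sum_w c_{i,w}w$ in the length-$k$ word basis and comparing coefficients of each length-$(k+1)$ word $W$ yields two types of constraints: (a) $c_{i,w}=0$ whenever $w$ starts or ends in an $H''$-letter (for instance, if $W=x_iw$ ends in $H''$, no $\sum_j\tilde\alpha_j x_j$-term can contribute to the $W$-coefficient since each $x_j\in H'$, forcing $c_{i,w}=0$; the symmetric case is analogous); and (b) $c_{i_0,w}=c_{i_1,w'}$ whenever $W=x_{i_0}w=w'x_{i_1}$ with both $x_{i_0},x_{i_1}\in H'$---a cyclic-shift identity. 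Given any $w$ containing an $H''$-letter, let $p^*$ denote the position of the rightmost such letter in $w$ and consider the cyclic rotations $\sigma^j(x_iw)$ for $j=0,1,\ldots,k-p^*$. For $j<k-p^*$, both endpoints of $\sigma^j(x_iw)$ lie in $H'$ (letters of $w$ beyond position $p^*$ are all in $H'$ by choice of $p^*$, and the front slot is $x_i$ or such a beyond-$p^*$ letter), so (b) applies and equates the $c$-values along the chain; at $j=k-p^*$ the last letter becomes $w[p^*]\in H''$, and (a) forces that value to vanish. Chaining, $c_{i,w}=0$. Hence $\tilde\alpha_i\in T(H')\cap\L_k(H)=\L_k(H')$ by Magnus--Witt (the Lie retraction is the identity on $T(H')$-elements); combined with $\tilde\alpha_i\in I$, this gives $\tilde\alpha_i=0$.

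The remaining equality with $\Ker\p'_k$ is then immediate: on $(\L_k(H'))^{n-1}\times 0^{2g}$ the values of $\p_k$ lie in the image of the injective inclusion $\L_{k+1}(H')\hookrightarrow\L_{k+1}(H)$ and coincide there with $\p'_k$, so the kernels match. The main obstacle is the combinatorial bookkeeping in the cyclic-shift chain: one must select $p^*$ as the rightmost, rather than any, $H''$-letter of $w$, so that all intermediate rotations keep both endpoints in $H'$ and relation (b) legitimately applies at each step; choosing an interior or leftward $H''$-letter would cause an $H''$-letter to enter the first position mid-chain, breaking the argument.
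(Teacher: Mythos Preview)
Your proof is correct and follows essentially the same approach as the paper: both pass to the tensor algebra via the Magnus expansion/Magnus--Witt theorem and compare coefficients in $\sum_i(X_ih_i-h_iX_i)=0$ by tracking the rightmost $H''$-letter in each monomial. The only cosmetic difference is packaging---the paper runs a minimality/contradiction argument on the function $l(z)$ (the length of the rightmost $X_i$-only tail of $z$) in place of your explicit cyclic-shift chain, and it works directly with the $\alpha_i$ rather than first splitting off the component $\tilde\alpha_i\in I$.
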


We clarify the image of the extended Milnor invariants as follows.

\begin{theorem} \label{theorem:Milnor} 
For $k\geq2$, the image of $\tilde\mu_{k+1}\colon \H(k) \to (F_k/F_{k+1})^{2g+n-1}$
 is $\Ker\p_k\cong \D_k(H)$.
 In addition, the inverse image of $\Ker\p'_k \cong \D_k(H')$ is $\H^0[k+1]$.
\end{theorem}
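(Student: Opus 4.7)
The plan is to prove the two assertions in turn, using Theorem~\ref{theorem:surjections} together with direct commutator calculations in the free nilpotent group $F/F_{k+2}$. The forward containment reduces to the identity $\tilde\eta_{k+2}(M)([\partial_n])=[\partial_n]$, while the reverse containment uses the already-established surjection $\H(k)\twoheadrightarrow\A_{k+2,k}$.

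For the image inclusion $\tilde\mu_{k+1}(\H(k)) \subseteq \Ker \p_k$: given $M \in \H(k)$, the homomorphism $\tilde\eta_{k+2}(M) \in \Aut_*(F/F_{k+2})$ fixes $[\partial_n] = \prod_i x_i \prod_j [m_j, l_j]$, because $i_+$ and $i_-$ agree on $\partial_n$. Applying (\ref{eqn:relation}) at level $k+2$, and noting that $\mu_{k+2}(M)_i,\mu'_{k+2}(M)_j,\mu''_{k+2}(M)_j$ all lie in $F_k/F_{k+2}$, and that $F_{k+1}/F_{k+2}$ is central with $[F_k,F_k]\subseteq F_{2k}\subseteq F_{k+2}$ for $k\geq 2$, one expands
$$\tilde\eta_{k+2}(M)([\partial_n]) \equiv [\partial_n] \cdot \prod_i [x_i, \mu_{k+1,i}] \cdot \prod_j [m_j, \mu''_{k+1,j}][\mu'_{k+1,j}, l_j] \pmod{F_{k+2}},$$
and the fixed-point equation forces $\p_k(\tilde\mu_{k+1}(M)) = 0$.

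For the reverse inclusion, given $v = (\alpha_i, \beta_j, \gamma_j) \in \Ker \p_k$, I construct an endomorphism $\phi$ of $F/F_{k+2}$ by $\phi(x_i) = \tilde\alpha_i^{-1} x_i \tilde\alpha_i$, $\phi(m_j) = m_j \tilde\gamma_j$, $\phi(l_j) = l_j \tilde\beta_j$ for chosen lifts $\tilde\alpha_i,\tilde\beta_j,\tilde\gamma_j \in F_k/F_{k+2}$. The same commutator expansion reversed shows $\phi([\partial_n]) = [\partial_n]$ precisely because $v \in \Ker \p_k$; the proposition following the definition of $\Aut_*$ upgrades $\phi$ to an automorphism; and $[x_i, F_k] \subseteq F_{k+1}$ gives $\phi(x_i)\equiv x_i \pmod{F_{k+1}}$, so $\phi \in \A_{k+2, k}$. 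By Theorem~\ref{theorem:surjections}(3), there is $M \in \H(k)$ with $\tilde\eta_{k+2}(M) = \phi$. The relation (\ref{eqn:relation}) combined with injectivity of $\mathrm{ad}(x_i)\colon F_k/F_{k+1} \to F_{k+1}/F_{k+2}$ (valid for $k \geq 2$ by the Lie-algebraic fact that the centralizer of a basis element in a free Lie algebra is one-dimensional) pins down $\tilde\mu_{k+1}(M) = v$ componentwise.

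For the second assertion: since $\H^0[k+1] = \H(k) \cap \H[k+1]$ by the refinement of Section~\ref{subsection:refine}, it suffices to show that $M \in \H(k)$ satisfies $\tilde\eta_{k+1}(M) = \id$ iff $\mu'_{k+1}(M) = \mu''_{k+1}(M) = 0$. From (\ref{eqn:relation}) at level $k+1$, the action $\tilde\eta_{k+1}(M)(x_i) = x_i$ is automatic since $\mu_{k+1}(M)_i \in F_k/F_{k+1}$ commutes with $x_i$ modulo $F_{k+1}$. Hence $\H^0[k+1]$ is precisely the preimage under $\tilde\mu_{k+1}$ of the subspace $(F_k/F_{k+1})^{n-1} \times \{0\}^{2g}$; intersecting with the first assertion and applying Lemma~\ref{lemma:D'} yields $\tilde\mu_{k+1}^{-1}(\Ker \p'_k) = \H^0[k+1]$. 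The main obstacle is the careful bookkeeping of commutator identities and signs in the free nilpotent setting, to confirm that the constraint $\tilde\eta_{k+2}(M)([\partial_n]) = [\partial_n]$ translates cleanly into $\p_k(\tilde\mu_{k+1}(M)) = 0$ under the identification $(\alpha_i, \beta_j, \gamma_j) \mapsto \sum x_i \otimes \alpha_i + \sum (m_j \otimes \gamma_j - l_j \otimes \beta_j)$.
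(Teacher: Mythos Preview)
Your overall architecture matches the paper's: factor $\tilde\mu_{k+1}$ through $\tilde\eta_{k+2}$, use the boundary condition on $[\partial_n]$ to land in $\Ker\p_k$, and for surjectivity build an automorphism $\phi$ from a given $v\in\Ker\p_k$ and pull it back to $\H(k)$ via Theorem~\ref{theorem:surjections}(3). The second assertion is handled correctly.

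There is, however, a genuine gap in the reverse inclusion. You verify that your $\phi\in\Aut(F/F_{k+2})$ fixes $[\partial_n]$, but membership in $\A_{k+2,k}$ requires $\phi\in\Aut_*(F/F_{k+2})$, and by definition this means there must exist a lift $\tilde\phi\in\Aut(F/F_{k+3})$ with $\tilde\phi(x_i)$ conjugate to $x_i$ \emph{and} $\tilde\phi([\partial_n])=[\partial_n]$. The proposition you invoke only upgrades endomorphisms to automorphisms; it says nothing about the boundary condition one level higher. If you simply lift your $\tilde\alpha_i,\tilde\beta_j,\tilde\gamma_j$ arbitrarily to $F_k/F_{k+3}$, the resulting $\tilde\phi$ will satisfy $\tilde\phi([\partial_n])=[\partial_n]\cdot\xi$ for some $\xi\in F_{k+2}/F_{k+3}$, and there is no reason $\xi$ should vanish. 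The paper closes this gap by an explicit computation: writing $\tilde\alpha_i=\alpha_i a_i$, $\tilde\beta_j=\beta_j b_j$, $\tilde\gamma_j=\gamma_j c_j$ with $a_i,b_j,c_j\in F_{k+1}/F_{k+2}$ and expanding $\tilde\phi([\partial_n])$, one finds the defect is $\xi\cdot\prod_i[x_i,a_i]\prod_j[m_j,c_j][b_j,l_j]$; since the bracket map $H\otimes\L_{k+1}(H)\to\L_{k+2}(H)$ is onto, the $a_i,b_j,c_j$ can be chosen to cancel $\xi$. Without this step you have not shown $\phi\in\Aut_*(F/F_{k+2})$, so you cannot invoke Theorem~\ref{theorem:surjections}(3).

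A minor point: your assignment $\phi(m_j)=m_j\tilde\gamma_j$, $\phi(l_j)=l_j\tilde\beta_j$ swaps $\beta$ and $\gamma$ relative to relation~(\ref{eqn:relation}) and the definition of $\p_k$; with this swap the computation of $\phi([\partial_n])$ does not match $\p_k(v)$. This is easily repaired but should be made consistent.
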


Now we relate the successive quotients of the filtration of $\H_{g,n}$ to~$\D_k(H)$.

\begin{theorem}
	\label{theorem:rank}
	For $k\geq 2$, there are isomorphisms 
	$$ \frac{\H(k)}{\H(k+1)} \cong \D_k(H), \quad 
	 \frac{\H^0[k+1]}{\H(k+1)} \cong \D_k(H'), \quad \textrm{and}\quad
	 \frac{\H[2]}{\H(2)} \cong \D_1(H').$$
\end{theorem}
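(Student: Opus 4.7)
The plan is to apply the first isomorphism theorem to $\tilde\mu_{k+1}$ restricted to subgroups on which its crossed-homomorphism formula $\tilde\mu_{k+1}(MN)=\tilde\mu_{k+1}(M)\cdot\tilde\eta_{k+1}(M)(\tilde\mu_{k+1}(N))$ collapses to ordinary addition. Once this happens, the image is read off from Theorem~\ref{theorem:Milnor} and the kernel is read off from the definition $\H(j)=\Ker\tilde\mu_j$.

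For the first isomorphism (case $k\geq 2$), I would start by noting that $\H(k)\subseteq\H[k]$ by (\ref{eqn:relation}), so for $M\in\H(k)$ the lift $\tilde\eta_{k+1}(M)$ lies in $\Ker\{\Aut(F/F_{k+1})\to\Aut(F/F_k)\}$ and therefore acts trivially on $F_k/F_{k+1}$. Since $\tilde\mu_{k+1}$ takes values in $(F_k/F_{k+1})^{2g+n-1}$ on $\H(k)$, the twist vanishes, so $\tilde\mu_{k+1}|_{\H(k)}$ is a genuine homomorphism with kernel $\H(k+1)$ and image $\Ker\p_k\cong\D_k(H)$ by Theorem~\ref{theorem:Milnor}. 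For the second isomorphism, I would restrict further to $\H^0[k+1]\subseteq\H(k)$; the ``in addition'' clause of Theorem~\ref{theorem:Milnor} says that the preimage of $\Ker\p'_k$ inside $\H(k)$ is exactly $\H^0[k+1]$, so the restricted image equals $\D_k(H')$ (via Lemma~\ref{lemma:D'}), while its kernel is $\H^0[k+1]\cap\H(k+1)=\H(k+1)$.

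For the third isomorphism, the subtlety is that $\tilde\mu_2$ is only a crossed homomorphism on $\H$. My plan is to restrict to $\H[2]=\Ker\tilde\eta_2$, where the action in the formula is trivial and so $\tilde\mu_2|_{\H[2]}$ is an honest homomorphism; its kernel inside $\H[2]$ is $\H[2]\cap\H(2)=\H(2)$. For the image, elements of $\H[2]$ satisfy $\mu'_2=\mu''_2=0$ by (\ref{eqn:relation}), so the image sits in $H^{n-1}\times 0^{2g}$. Pushing the surface relation $[\partial_n]=\prod_i x_i\prod_j[m_j,l_j]$ into $\pi_1(M)$ via $i_+$ and $i_-$ (which agree on $\partial\Sigma$) and comparing the two sides in $F/F_3$ should force $\p_1\circ\tilde\mu_2=0$ on $\H[2]$---the $k=1$ analogue of the constraint used in Theorem~\ref{theorem:Milnor}. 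Combined with Lemma~\ref{lemma:D'}, this puts the image inside $\D_1(H')$. For surjectivity I would exploit the decomposition $\H[2]\cong\H^0[2]\times\T$ with $\T\cong\Z^{n-1}$ generated by boundary Dehn twists $\tau_i$: these realize the diagonal self-linking generators of $\D_1(H')$, while the off-diagonal symmetric entries are realized on $\H^0[2]$ by explicit Borromean/clasper-type homology cylinder constructions mirroring those used at higher $k$ in Theorem~\ref{theorem:Milnor}.

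The main obstacle sits entirely in the third case: verifying $\p_1\circ\tilde\mu_2=0$ on $\H[2]$ needs a careful Magnus-style comparison of the images of the boundary relation under $i_\pm$, and realizing every off-diagonal symmetric matrix in $\D_1(H')$ requires explicit surgical constructions. The first two isomorphisms are essentially mechanical once the homomorphism property of $\tilde\mu_{k+1}$ on $\H(k)$ is pinned down and combined with Theorem~\ref{theorem:Milnor}.
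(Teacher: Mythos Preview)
Your approach to the first two isomorphisms is correct and matches the paper's: Theorems~\ref{theorem:Milnor} and~\ref{theorem:rank} are proved together there via a surjective homomorphism $\Theta\colon\A_{k+2,k}\to\Ker\p_k$ satisfying $\Theta\circ\tilde\eta_{k+2}=\tilde\mu_{k+1}$, which is exactly your first-isomorphism-theorem argument rewritten on the automorphism side. For the third isomorphism your direct use of $\tilde\mu_2|_{\H[2]}$ is likewise equivalent to the paper's map $\bar\Theta^S\colon\Z^{n-1}\times\frac{\K_{3,2}}{\A_{3,2}}\to\Ker\p'_1$ (which unwinds to $\tilde\mu_2$ under the identification of Corollary~\ref{corollary:surjectivity}), and your boundary-relation argument for containment in $\D_1(H')$ is right. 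The one genuine divergence is in surjectivity: you propose explicit homology-cylinder constructions ``mirroring those used at higher $k$ in Theorem~\ref{theorem:Milnor}'', but no such constructions appear there---the paper instead lifts a given element of $\Ker\p_k$ (resp.\ $\Ker\p'_1$) to an automorphism in $\A_{k+2,k}$ (resp.\ $\K_{3,2}$) and then invokes the surgery-based realization result Theorem~\ref{theorem:surjections} to produce a homology cylinder. Your hands-on route would work (off-diagonal generators are realized by Hopf-linked string links pushed in via $\H_{0,n}\hookrightarrow\H_{g,n}$), but it is an alternative to, not a reuse of, the paper's method; the paper's route is cleaner because it recycles the heavy lifting already absorbed into Theorem~\ref{theorem:surjections}.
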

The proof of the above theorems are achieved by associating the image $\A_{k+2, k}$ of $\H(k)$ under $\tilde\eta_l$ with $\Ker\p_k\cong\D_k(H)$. We also postpone the precise proof to the next section.

\begin{figure}
$$
\tikzcdset{arrow style=tikz, diagrams={>=stealth}}
\tikzcdset{every label/.append style = {font = \tiny}}
\begin{tikzcd}[row sep={1.5cm,between origins}, column sep={2.4cm,between origins}]
&&[-4mm] \H \ar[from=dl, hook,"\begin{array}{l}
\Coker\\= \Z^{n-1}\end{array}" xshift=0, sloped] \ar[from=ddd, hook]\\[-5mm]
& \H^0 \ar[from=ddd, hook] \ar[rrr, twoheadrightarrow, crossing over] &[-10mm] &[-2mm]& \K_{l,1}  \ar[from=ull,twoheadrightarrow, "\tilde\eta_l"]\ar[from=ddd, hook, "
\Coker=\K_{2,1}" {fill=white,xshift=0,yshift=-12}, sloped] \rlap{$=\Aut_*(F/F_l)$}\\
\H(1) \ar[from=ddd, hook] \ar[ru, equal] \ar[rrr, twoheadrightarrow, crossing over, "\tilde\eta_l^0"] &&& \A_{l,1} \ar[ru, equal]\\[-.8cm]
&& \H[2] \ar[from=ddd, hook] \ar[from=dl, hook,"\begin{array}{l}
\Coker\\= \Z^{n-1}\end{array}" xshift=0, sloped]  \\[-5mm]
& \H^0[2] \ar[from=ddd, hook] \ar[rrr,twoheadrightarrow, crossing over] \ar[uul, hook'] &&& \K_{l,2} \ar[from=ull,twoheadrightarrow]\ar[from=ddd, hook, "
\Coker=\K_{3,2}" {fill=white,xshift=0,yshift=-12}, sloped] \ar[luu, hook', "
\Coker= \A_{2,1}"' {
yshift=11}
, sloped]\\
\H(2) \ar[from=ddd, hook] \ar[ru, hook, crossing over] \ar[rrr, twoheadrightarrow, crossing over] &&& \A_{l,2} \ar[uuu, hook, crossing over] \ar[ru, hook, "\begin{array}{l}
\Coker\\=\frac{\D_1(H')}{\Z^{n-1}}\end{array}" {xshift=-2}
, sloped] \\[-.8cm]
&& \H[3] \ar[from=ddd, hook, dash pattern=on 10mm off 1mm on 1mm off 1mm on 1mm off 1mm on 1mm off 1mm on 1mm off 1mm on 1mm off 1mm on 1mm off 1mm on 1mm off 1mm on 11mm] \ar[from=dl, hook,"\begin{array}{l}
\Coker\\= \Z^{n-1}\end{array}" xshift=0, sloped]\\[-5mm]
& \H^0[3] \ar[from=ddd, hook, dash pattern=on 10mm off 1mm on 1mm off 1mm on 1mm off 1mm on 1mm off 1mm on 1mm off 1mm on 1mm off 1mm on 1mm off 1mm on 1mm off 1mm on 11mm] \ar[uul, hook'] \ar[rrr, twoheadrightarrow, crossing over] &&& \K_{l,3} \ar[from=ddd, hook, dash pattern=on 10mm off 1mm on 1mm off 1mm on 1mm off 1mm on 1mm off 1mm on 1mm off 1mm on 1mm off 1mm on 1mm off 1mm on 1mm off 1mm on 11mm] \ar[from=ull,twoheadrightarrow] \ar[luu, hook', "
\Coker= \A_{3,2}"' {xshift=0,yshift=11}, sloped]\\
\H(3) \ar[from=ddd, hook, dash pattern=on 10mm off 1mm on 1mm off 1mm on 1mm off 1mm on 1mm off 1mm on 1mm off 1mm on 1mm off 1mm on 1mm off 1mm on 1mm off 1mm on 11mm] \ar[ru, hook, crossing over] 
&&& \A_{l,3} \ar[from=lll, twoheadrightarrow, crossing over] \ar[uuu, hook, crossing over,"
\Coker= \D_2(H)" {fill=white,xshift=0,yshift=0}, sloped] \ar[ru, hook, "\begin{array}{l}
\Coker\\= \D_2(H')\end{array}" {xshift=-2}, sloped] \\[.3cm]
&&\H[k] \ar[from=ddd, hook]  \ar[from=dl, hook, "\begin{array}{l}
\Coker\\= \Z^{n-1}\end{array}" xshift=0, sloped] \\[-5mm]
\hphantom{} & \H^0[k] \ar[from=ddd, hook]\ar[rrr, twoheadrightarrow, crossing over] &&\hphantom{} & \K_{l,k}\ar[from=ull,twoheadrightarrow] \ar[from=ddd, hook, "
\Coker=\K_{k+1,k}" {fill=white,xshift=0,yshift=-12}
, sloped]
\\
\H(k) 
\ar[from=ddd, hook] \ar[ru, hook, crossing over] \ar[rrr, twoheadrightarrow, crossing over] &&& \A_{l,k}  \ar[uuu, hook, dash pattern=on 10mm off 1mm on 1mm off 1mm on 1mm off 1mm on 1mm off 1mm on 1mm off 1mm on 1mm off 1mm on 1mm off 1mm on 1mm off 1mm on 11mm, crossing over] \ar[ru, hook] \ar[ru, hook, "\begin{array}{l}
\Coker\\= \D_{k-1}(H')\end{array}" {xshift=-2}, sloped]\\[-.8cm]
&& \H[k+1] \ar[from=ddd, dash pattern=on 1mm off 1mm on 1mm off 1mm on 1mm off 1mm on 1mm off 1mm on 1mm off 1mm on 1mm off 1mm on 15mm]  \ar[from=dl, hook,"\begin{array}{l}
\Coker\\=\Z^{n-1}\end{array}" xshift=-2, sloped]\\[-5mm]
& \H^0[k+1] \ar[from=ddd, dash pattern=on 1mm off 1mm on 1mm off 1mm on 1mm off 1mm on 1mm off 1mm on 1mm off 1mm on 1mm off 1mm on 15mm]\ar[uul, hook']   \ar[rrr, twoheadrightarrow, crossing over] &&& \K_{l,k+1} \ar[from=ull,twoheadrightarrow]\ar[from=ddd, dash pattern=on 1mm off 1mm on 1mm off 1mm on 1mm off 1mm on 1mm off 1mm on 1mm off 1mm on 1mm off 1mm on 15mm] \ar[luu, hook', "
\Coker= \A_{k+1,k}"' {xshift=0,yshift=11},
sloped]\\
\H(k+1) \ar[from=ddd, dash pattern=on 1mm off 1mm on 1mm off 1mm on 1mm off 1mm on 1mm off 1mm on 1mm off 1mm on 1mm off 1mm on 15mm] \ar[ru, hook, crossing over] \ar[rrr, twoheadrightarrow, crossing over] &&& \A_{l,k+1}  \ar[from=ddd, dash pattern=on 1mm off 1mm on 1mm off 1mm on 1mm off 1mm on 1mm off 1mm on 1mm off 1mm on 1mm off 1mm on 15mm] \ar[uuu, hook, crossing over, "
\Coker= \D_k(H)" {fill=white,xshift=0,yshift=0}, sloped] \ar[ru, hook,"\begin{array}{l}
\Coker\\= \D_{k}(H')\end{array}" {xshift=-2}, sloped]\\[-.8cm]
&& \phantom{A} \\[-5mm]
& \phantom{A} \ar[uul, dash pattern= on 1mm off 1mm on 1mm off 1mm on 1mm off 1mm on 1mm off 1mm on 1mm off 1mm on 15mm] & && \phantom{A}\ar[uul, dash pattern= on 1mm off 1mm on 1mm off 1mm on 1mm off 1mm on 1mm off 1mm on 1mm off 1mm on 15mm]\\
 \phantom{A}  &&& \phantom{A} \\[-.8cm]
\end{tikzcd}$$
\caption{}
\label{figure:summary}
\end{figure}

The diagram in Figure~\ref{figure:summary} on page \pageref{figure:summary}, summarizes results so far. The horizontal surjections are (restrictions of) the extended Johnson homomorphisms. What written over the injection arrows are the cokernels of the maps. 

In the above diagram, each subgroup of $\H$ on the left surjects to the subgroup of $\Aut_*(F/F_l)$ located at the same position on the right under $\tilde\eta_l$. What written on the injection arrows are the cokernels of the maps. The cokernels of the maps at the same position on both sides are isomorphic except $\Z^{n-1}$ which corresponds to the ``framing'' $\bar\mu_2(-)_i$.

\subsection{Images under extended Orr invariants}
We can verify the image of the extended Orr invariants and relation between the quotients of the filtration and $H_3(F/F_k)$ as follows. Note that the injection $F'\to F$ induces $H_3(F'/F'_k)\to H_3(F/F_k)$, which is also injective. By abuse of notation, we think $H_3(F'/F'_k)$ as a subgroup of $H_3(F/F_k)$.
\begin{theorem}
	For $k\geq2$, $\tilde\theta_k\colon\H(k) \to H_3(F/F_k)$ is surjective.
	The inverse image of $\Im\{H_3(F/F_l)\to H_3(F/F_k)\}$  under $\tilde\theta_k$ is $H(l)$ for $k\leq l\leq 2k-1$.
\end{theorem}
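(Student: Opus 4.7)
The plan is to prove the surjectivity claim first and then the inverse-image statement, using the surjectivity freely. Throughout, write $\pi_{l,k}\colon F/F_l \twoheadrightarrow F/F_k$ for the canonical projection and, for $M\in\H(k)$, write $c_M\colon \hat M\to K(F/F_k,1)$ for the classifying map coming from the isomorphism $F/F_k \cong \pi_1(\hat M)/\pi_1(\hat M)_k$ granted by Stallings' theorem (since $\tilde\mu_k(M)=0$); by definition $\tilde\theta_k(M)=(c_M)_*[\hat M]$. The inclusion $\H(l)\subseteq \tilde\theta_k^{-1}(\Im\{H_3(F/F_l)\to H_3(F/F_k)\})$ is immediate from naturality: if $M\in\H(l)$, Stallings again supplies $F/F_l\cong \pi_1(\hat M)/\pi_1(\hat M)_l$, so $c_M$ factors through $K(F/F_l,1)$, the pushforward of $[\hat M]$ to $H_3(F/F_l)$ is $\tilde\theta_l(M)$, and hence $\tilde\theta_k(M)=(\pi_{l,k})_*\tilde\theta_l(M)$.

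For surjectivity of $\tilde\theta_k$, I realize any given $\alpha\in H_3(F/F_k)$ by a closed oriented 3-manifold $N$ together with a map $f\colon N\to K(F/F_k,1)$ such that $f_*[N]=\alpha$, which exists because $\Omega_3^{SO}(K(F/F_k,1))\twoheadrightarrow H_3(F/F_k)$ (a consequence of $\Omega_3^{SO}=0$). After a connected sum of $N$ with a homology 3-sphere and surgery along curves representing the kernel of $f_*\colon H_1(N)\to H$, I may assume $H_1(N)\cong H$ and that $f$ carries a chosen generating set of $\pi_1(N)$ to the images of $x_i,m_j,l_j$ in $F/F_k$. An embedding of $\Sigma\times I$ in $N$ along these curves, arranged so that the complement is a homology cylinder --- a construction in the spirit of~\cite{CO} --- yields $M\in\H(k)$ with $\hat M=N$, $c_M=f$, and $\tilde\theta_k(M)=\alpha$. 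In the case $g=0$ this specializes to Igusa-Orr's realization result~\cite{IO}.

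For the hard inclusion $\tilde\theta_k^{-1}(\Im\{H_3(F/F_l)\to H_3(F/F_k)\})\subseteq \H(l)$, given $M\in\H(k)$ with $\tilde\theta_k(M)=(\pi_{l,k})_*\alpha$, the surjectivity of $\tilde\theta_l$ (just proved) furnishes $N\in\H(l)$ with $\tilde\theta_l(N)=\alpha$; then $\tilde\theta_k(N)=\tilde\theta_k(M)$ by the naturality step, and since $\tilde\theta_k$ is a homomorphism, $\tilde\theta_k(MN^{-1})=0$ in $H_3(F/F_k)$. The Igusa-Orr vanishing criterion adapted to homology cylinders --- if $M'\in\H(k)$ has $\tilde\theta_k(M')=0$ then $M'\in\H(2k-1)$ --- combined with the hypothesis $l\leq 2k-1$, forces $MN^{-1}\in\H(2k-1)\subseteq\H(l)$, whence $M\in\H(l)$.

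The principal obstacle is proving this vanishing criterion. I would follow the strategy of~\cite{IO,CO}: successively obstruct the lift of $c_M$ to $\hat M\to K(F/F_j,1)$ for $j=k+1,k+2,\ldots,2k-1$, and identify each obstruction --- an element of $\Coker\{H_3(F/F_{j+1})\to H_3(F/F_j)\}$ appearing in the Lyndon-Hochschild-Serre spectral sequence for the central extension $F_j/F_{j+1}\to F/F_{j+1}\to F/F_j$ --- with the next Milnor invariant $\mu_{j+1}(M)$. The key algebraic simplification available in the range $k\leq j<2k-1$ is that $[F_k,F_k]\subseteq F_{2k}\subseteq F_{2k-1}$, so $F_k/F_{2k-1}$ is abelian and the relevant differentials collapse, which lets the obstruction be read off directly from $\tilde\theta_k(M)$ rather than requiring more Postnikov data.
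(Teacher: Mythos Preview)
Your surjectivity argument (bordism plus surgery, then cutting along an embedded $\Sigma$) is exactly the sketch the paper gives, and your reduction of the hard inclusion to the vanishing criterion $\tilde\theta_k(M')=0\Rightarrow M'\in\H(2k-1)$ via surjectivity of $\tilde\theta_l$ and the group law is also the structure the paper implicitly uses. So on the overall architecture you agree with the paper.

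Where you diverge is in the proof of the vanishing criterion. The paper does \emph{not} run a stepwise obstruction argument. Instead it combines two ingredients already in hand: the isomorphism $\H(j)/\H(j+1)\cong\D_j(H)$ from Theorem~\ref{theorem:rank}, and the Igusa--Orr facts that $H_3(F/F_k)$ is free abelian of rank $\sum_{j=k}^{2k-2}\rank\D_j(H)$ and that $H_3(F/F_{2k-1})\to H_3(F/F_k)$ is the zero map. The latter gives $\H(2k-1)\subseteq\Ker\tilde\theta_k$ by naturality; the former shows $\H(k)/\H(2k-1)$ is free abelian of the same rank as $H_3(F/F_k)$; together with surjectivity this forces $\tilde\theta_k\colon\H(k)/\H(2k-1)\to H_3(F/F_k)$ to be an isomorphism, which is the vanishing criterion. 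The paper alludes to this as ``apply the known fact that $H_3(F/F_k)\cong\bigoplus\ldots$'' and leaves the details to the reader; your direct obstruction-theoretic route is the ``alternatively, we can prove them directly'' option the paper also mentions.

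Your route is legitimate but your sketch underestimates one point. In the inductive step at level $j$ you only know that $\tilde\theta_j(M')$ dies in $H_3(F/F_k)$, not in $H_3(F/F_j)$; to conclude that its image in $\Coker\{H_3(F/F_{j+1})\to H_3(F/F_j)\}$ vanishes you need $\Ker\{H_3(F/F_j)\to H_3(F/F_k)\}\subseteq\Im\{H_3(F/F_{j+1})\to H_3(F/F_j)\}$ for $k\le j<2k-1$. Your remark that $F_k/F_{2k-1}$ is abelian is relevant but does not by itself deliver this containment (the extension $F_k/F_{2k-1}\to F/F_{2k-1}\to F/F_k$ is not central once $k\ge 3$), and in practice one ends up invoking precisely the Igusa--Orr filtration structure of $H_3(F/F_j)$ that the paper's rank-counting argument uses wholesale. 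Given that Theorem~\ref{theorem:rank} is already available, the paper's approach is considerably shorter; your approach would be preferred only if one wanted an argument independent of Theorem~\ref{theorem:rank}.
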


To prove the surjectivity, we use the fact that $H_3(G)$ is isomorphic to the $k$-dimensional oriented bordism group $\Omega_k(G)$ of~$K(G,1)$ for any group~$G$. Any element of $(X^3 \to K(F/F_k,1)) \in \Omega_3(F/F_k) \cong H_3(F/F_k)$ can be realized as a homology cylinder with trivial~$\tilde\mu_k$, which is obtained by cutting $X$ along an embedded $\Sigma_{g,n}$. The surface should be chosen so that $\pi_1(\Sigma_{g,n})\to \pi_1(X)\to F/F_k$ sends the generetors $x_i,m_j,l_j$ to $[x_i],[m_j],[l_j]$ in~$F/F_k$. 

\begin{corollary} \label{cor:Orr}
	For $k\geq2$, the invariant $\tilde\theta_k$ induces an isomorphism
	$$\frac{\H(k)}{\H(k+1)} \to \Coker\{H_3(F/F_{k+1})\xrightarrow{\psi_{k+1,k}} H_3(F/F_k)\}.$$
\end{corollary}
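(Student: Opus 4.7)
The plan is to deduce this corollary directly from the preceding theorem by specializing $l = k+1$. Since $k \geq 2$, the constraint $k \leq l \leq 2k-1$ from that theorem is satisfied precisely when $l = k+1$ is allowed, i.e.\ $k+1 \leq 2k-1$, which is equivalent to $k \geq 2$. So the hypothesis of the corollary is exactly what is needed to invoke the theorem at the level immediately above $k$.

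First, I would form the composition
$$\H(k) \xrightarrow{\tilde\theta_k} H_3(F/F_k) \twoheadrightarrow \Coker\{\psi_{k+1,k}\colon H_3(F/F_{k+1}) \to H_3(F/F_k)\}.$$
Since $\tilde\theta_k$ is a homomorphism (as recorded in Section~\ref{subsec:Orr}) and the projection is a homomorphism, the composition is a homomorphism. It is surjective because the first assertion of the preceding theorem gives surjectivity of $\tilde\theta_k$, and surjectivity is preserved under postcomposition with a surjection.

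Next, I would identify the kernel of this composition. An element $M \in \H(k)$ is in the kernel if and only if $\tilde\theta_k(M) \in \Im(\psi_{k+1,k})$. By the second assertion of the preceding theorem with $l = k+1$, this is equivalent to $M \in \H(k+1)$. Applying the first isomorphism theorem yields the desired isomorphism
$$\frac{\H(k)}{\H(k+1)} \xrightarrow{\ \cong\ } \Coker(\psi_{k+1,k}).$$

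The entire content of the argument is packaged inside the preceding theorem, so there is no substantive obstacle at this stage; the only thing to check is the numerical range on $l$, and the inequality $k+1 \leq 2k-1$ is precisely the numerical reason the corollary requires $k \geq 2$ rather than $k \geq 1$. The genuine work has already been done in proving the surjectivity of $\tilde\theta_k$ and the identification $\tilde\theta_k^{-1}(\Im \psi_{l,k}) = \H(l)$.
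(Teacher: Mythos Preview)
Your proof is correct and is precisely the ``direct'' approach the paper alludes to: the paper does not write out a proof of this corollary but remarks that it can be obtained either by invoking the known Igusa--Orr isomorphism $\D_k(H)\cong\Coker\psi_{k+1,k}$ together with Theorem~\ref{theorem:rank}, or directly, leaving the details to the reader. Your argument---specializing the preceding theorem to $l=k+1$ and applying the first isomorphism theorem---is exactly that direct route, and your observation that $k+1\le 2k-1$ iff $k\ge 2$ correctly explains the hypothesis.
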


We can obtain them in two ways. One approach is to apply the known fact that $H_3(F/F_k)\cong\displaystyle\bigoplus_{i=k}^{2k-2} \Z^{(2g+n-1)N_k - N_{k+1}}$ and $\D_k(H) \cong \Coker\{H_3(F/F_{k+1})\to H_3(F/F_k)\}$ for $k\geq 2$ (see~\cite{IO}). Alternatively, we can prove them directly. 
We leave the proof to the reader.  

The corollary implies that the invariants $\tilde\theta_k$ with values in the cokernel and $\tilde\mu_{k+1}$ on $\H(k)$ are equivalent. 
The invariant $\tilde\theta_k$ with values in $H_3(F/F_k)$ is equivalent to $\tilde\mu_{2k-1}$ on~$\H(k)$. 
We have an isomorphism $\tilde\theta_k\colon \frac{\H(k)}{\H(2k-1)}\to H_3(F/F_k)$. Remark that $\tilde\mu_{2k-1}\colon \H(k)\to (F_k/F_{2k-1})^{2g+n-1}$ is also a homomorphism. It follows from Lemma~\ref{lemma:keys}(3).

\subsection{Relation between the invariants} \label{subsec:relation}
\[\tikzcdset{arrow style=tikz, diagrams={>=stealth}}
\begin{tikzcd}[row sep={1.5cm,between origins}, column sep={1.8cm,between origins}]
 &[-2.8mm]&[2mm]&1 \ar[dd]&&[2mm]&[-2.8mm]\\[-11mm]
1\ar[dr]&&&&&& 1 \ar[dl]\\[-9mm]
&\H(2k-1) \ar[hook]{rr} \ar[hookrightarrow]{drr} &&\H(k+1) \ar[hook]{rr} \ar[hookrightarrow]{d} &&\H^0[k+1] \ar[hook']{lld} \\
&&& \H(k) \ar[lld, two heads, "\tilde\eta_{k+1}"']  \ar[d, two heads, "\tilde\mu_{k+1}"] \ar[rrd, two heads,"\tilde\theta_k"] \\
&\A_{k+1,k} \ar[ddd, hook',start anchor={[xshift=-5]},end anchor={[xshift=-10]}] && \Ker \p_k \ar[ddd,hook', start anchor={[xshift=-5]},end anchor={[xshift=-10]}] \ar[ll, two heads]&\cong \D_k(H)\hphantom{H\otimes\;} \ar[ddd, hook',start anchor={[xshift=-5]},end anchor={[xshift=-10]}] & H_3(F/F_k) \ar[l, two heads, end anchor={[xshift=-5ex]}] \ar[dd, two heads] \\[-9mm]
1\ar[from=ur]&&&&&& 1 \ar[from=ul]\\[-11mm]
 &&&1\ar[from=uu] &&\hphantom{aaaaaaa}\Coker\psi_{k+1,k}
\ar[luu, "\sim" {xshift=7, sloped},end anchor={[xshift=-12]}] \\[-10mm]
 &\Aut(F/F_{k+1})\hphantom{aaaaaa} && (F_k/F_{k+1})^{2g+n-1}\hphantom{aaaaaaaaa} \ar[ll,end anchor={[xshift=-30]}] &\qquad\qquad\cong H\otimes \L_k(H)\hphantom{aaaaaaaaaa}
 \end{tikzcd}\]

The map $\psi_{k+1,k}$ is the natural map $H_3(F/F_{k+1})\to H_3(F/F_k)$ appeared in Corollay~\ref{cor:Orr}.
The above diagram commutes, and the diagonal and vertical sequences are exact.
The bottom left map $(F_k/F_{k+1})^{2g+n-1}\to \Aut(F/F_{k+1})$ sends $(\alpha_1,\ldots,\alpha_{n-1},\beta_1,\ldots,\beta_g, \gamma_1,\ldots,\gamma_g)
$ to the automorphism $x_i \mapsto \alpha_i^{-1} x_i \alpha_i$, $m_j\mapsto m_j\beta_j$, and $l_j\mapsto l_j\gamma_j$, and its image is in $\Ker\{\Aut(F/F_{k+1})\to\Aut(F/F_k)\}$. The map and its restriction on $\Ker\p_k$ are not homomorphisms. All the other arrows in the above diagram are homomorphisms.
The lower right homomorphism $H_3(F/F_k)\to \D_k(H)$ is defined using the Massey product of classes in $H^1(F/F_k)$ in~\cite{GL}.

\subsection{Rank of quotients of the filtration} 
From Theorem~\ref{theorem:rank}, we get the exact ranks of quotient groups:

\begin{corollary}
	\label{corollary:rank} Suppose $k\geq 2$, then
	$$\frac{\H(k)}{\H(k+1)}\cong \Z^{(2g+n-1)N_k - N_{k+1}}$$
	where $N_k=\rank F_k/F_{k+1}=\frac{1}{k}\sum_{d|k} \varphi(d)\; (2g+n-1)^{k/d}$, and $\varphi$ is the M\"obius function. Also, we have
	$$\frac{\H^0[k+1]}{\H(k+1)}\cong \Z^{(n-1)N_k' - N_{k+1}'} \quad \textrm{and} \quad \frac{\H^0[2]}{\H(2)}\cong \Z^{\binom{n-1}{2}}= \Z^{\frac{1}{2}(n-1)(n-2)}$$
		where $N_k'= \rank F'_k/F'_{k+1}=\frac{1}{k}\sum_{d|k} \varphi(d)\; (n-1)^{k/d}$.
\end{corollary}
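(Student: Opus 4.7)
The plan is to derive Corollary~\ref{corollary:rank} directly from Theorem~\ref{theorem:rank} by computing the ranks of $\D_k(H)$, $\D_k(H')$, and $\D_1(H')$, each of which reduces to the classical Witt formula for the graded pieces of a free Lie algebra. First I would note that the bracket map $H\otimes\L_k(H)\to\L_{k+1}(H)$ is surjective: since $\L(H)$ is generated as a Lie algebra by $H=\L_1(H)$, any degree-$(k+1)$ element can be written as a sum of brackets $[x,y]$ with $x\in H$ and $y\in\L_k(H)$ by repeatedly applying the Jacobi identity to push degree-one entries outward. Combined with the definition of $\D_k(H)$ as the kernel, this yields a short exact sequence of free abelian groups, where the outer terms are free abelian because $H$ is free abelian and $\L_k(H)\cong F_k/F_{k+1}$ is free abelian of rank $N_k$ by Witt's theorem (or equivalently by the Magnus identification).

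Taking ranks gives $\rank\D_k(H)=(2g+n-1)N_k-N_{k+1}$, and combined with the first isomorphism of Theorem~\ref{theorem:rank} this proves the first formula. The identical argument applied to $H'=H_1(\Sigma_{0,n})$ of rank $n-1$ in place of $H$ yields $\rank\D_k(H')=(n-1)N_k'-N_{k+1}'$ and, via the second isomorphism of Theorem~\ref{theorem:rank}, the second formula. In both cases the quotients are free abelian because they sit as subgroups of the free abelian groups $H\otimes\L_k(H)$ and $H'\otimes\L_k(H')$, respectively.

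For the $k=1$ claim, I would invoke the splitting $\H[2]\cong\H^0[2]\times\T\cong\H^0[2]\times\Z^{n-1}$ established in Section~\ref{subsection:refine}, which gives
\[
\frac{\H[2]}{\H(2)}\;\cong\;\frac{\H^0[2]}{\H(2)}\times\Z^{n-1}.
\]
By the last part of Theorem~\ref{theorem:rank} the left hand side is $\D_1(H')$, so it remains to compute $\rank\D_1(H')$. Applying the same bracket-surjectivity argument at $k=1$ with $\L_2(H')\cong \Lambda^2 H'$ gives $\rank\D_1(H')=(n-1)^2-\binom{n-1}{2}=\binom{n}{2}$. Since the $\Z^{n-1}$ summand splits off, the complementary quotient $\frac{\H^0[2]}{\H(2)}$ is free abelian of rank $\binom{n}{2}-(n-1)=\binom{n-1}{2}$, which is the third formula.

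The main point requiring care is ensuring the quotients are genuinely free abelian and not merely of the stated rank; this is handled uniformly by the fact that $\D_k(H)$ and $\D_k(H')$ sit inside free abelian groups, and for the $k=1$ case by the direct-summand splitting built into $\H[2]\cong \H^0[2]\times\T$. Beyond this verification, no serious obstacle arises: Theorem~\ref{theorem:rank} carries all the substantive content, and the remainder is the Witt rank count for free Lie algebras.
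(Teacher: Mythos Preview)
Your proposal is correct and follows the same route the paper intends: the paper simply asserts that Corollary~\ref{corollary:rank} follows from Theorem~\ref{theorem:rank}, and you have supplied exactly the standard rank computation for $\D_k(H)$ and $\D_k(H')$ via the short exact sequence coming from surjectivity of the bracket map, together with the splitting $\H[2]\cong\H^0[2]\times\Z^{n-1}$ to handle the $k=1$ case. There is nothing to add.
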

We can also compute the ranks of $\frac{\H(k)}{\H^0[k+1]}$ and $\frac{\H^0[k]}{\H^0[k+1]}\cong \frac{\H[k]}{\H[k+1]}$ for~$k\geq 2$ using the above corollary.

\subsection{First quotient of the extended Milnor filtration} \label{subsec:first level}
In order to complete the investigation, it remains to characterize $\frac{\H(1)}{\H(2)}$ and $\frac{\H^0[1]}{\H^0[2]} \cong \frac{\H(1)}{\H^0[2]}$, which are all the non-abelian cases except~$\frac{\H_{0,n}(1)}{\H_{0,n}(2)}$ .
We found that $\frac{\H(1)}{\H^0[2]}$ is isomorphic to $\A_{2,1}=\Aut_*(F/F_2)$. Recall that $\H/\H[2] \cong \Aut^*(H)$ where 		
	\begin{align*}
		\Aut^*(H)&=\bigg\{\phi\in\Aut(H) ~\bigg|\begin{array}{ll}\phi \textrm{ fixes }[\partial_i]\textrm{ for all }i=1,\cdots, n \textrm{ and }\\
		 \textrm{preserves the intersection form of } \Sigma\end{array}\bigg\}
	\end{align*}
	(see \cite[Proposition~2.3 and Remark~2.4]{GS}). Note that 
	\begin{align*}
		\Aut^*(H)&\cong\bigg\{\begin{bmatrix}
			I_{n-1} & A \\ 0 & P 
		\end{bmatrix} ~\bigg|~P\in\Sp(2g,\Z), A\in{\bf{M}}_{(n-1)\times 2g}(\Z) \bigg\} \\
		&\cong\Z^{(n-1)2g} \rtimes \Sp(2g,\Z)
	\end{align*}
	where ${\bf{M}}_{n-1,2g}(\Z)$ is the additive group of $(n-1)\times 2g$ matrices over $\Z$, and $\Sp(2g,\Z)$ is the symplectic group over~$\Z$.
So, $\frac{\H(1)}{\H^0[2]} \cong \frac{\H^0[1]}{\H^0[2]}\cong\Z^{(n-1)2g} \rtimes \Sp(2g,\Z)$. The fact $\Aut^*(H)=\Aut_*(F/F_2)$ can be also checked directly. 
In the remaining part of this section, we focus on $\frac{\H(1)}{\H(2)}$. Let us discuss whether the exact sequence
$$1\to \frac{\H^0[2]}{\H(2)}\to \frac{\H(1)}{\H(2)} \to \frac{\H(1)}{\H^0[2]}\to1$$
splits. 
	We remark that if $g=0$, i.e.~the case of string links in homology 3-balls, 
	$\H_{0,n}(1)=\H^0_{0,n}[2]$, so $$\frac{\H_{0,n}(1)}{\H_{0,n}(2)}=\frac{\H^0_{0,n}[2]}{\H_{0,n}(2)}\cong  \Z^{\binom{n-1}{2}}=\Z^{\frac{1}{2}(n-1)(n-2)}.$$
This corresponds to the mutual linking while $\H/\H(1) \cong \Z^{n-1}$ corresponds to the self-linking.
	If $n=1$, then $\H_{g,1}(1)=\H_{g,1}$ and $\H_{g,1}[k]=\H_{g,1}^0[k] =\H_{g,1}(k)$, so $$\frac{\H_{g,1}(1)}{\H_{g,1}(2)}=  \frac{\H_{g,1}(1)}{\H^0_{g,1}[2]} = \frac{\H_{g,1}}{\H_{g,1}[2]} \cong \Aut_*(H)\cong \Sp(2g, \Z).$$
	 For general $g$ and $n$, we prove a partial result.
We recall $H''=H_1(\langle m_1,\cdots, m_g, l_1,\cdots, l_g \rangle) \subset H$ (see page~\pageref{splitting}). Let
$\H^{0}[1.5] := \{M\in\H^{0}~|~\tilde\eta_2(M)(H'')\subset H''\}$. Then, $M\in \H[1.5]$ if and only if 
$\tilde\eta_2(M)$ corresponds to the matrix $\left[\begin{array}{cc} I & 0 \\0 &P \end{array}\right]$. 
So, $\frac{\H^{0}[1.5]}{\H^{0}[2]} \cong \Sp(2g,\Z)$. However, $\H^{0}[1.5]$ is not normal in~$\H^{0}$.
\begin{proposition}
	There is a split exact sequence
	$$ 1\to \frac{\H^0[2]}{\H(2)} \to \frac{\H^0[1.5]}{\H(2)} \to \frac{\H^0[1.5]}{\H^0[2]}\to 1.$$ Therefore, 
	\begin{align*} 
		\frac{\H_{g,n}^0[1.5]}{\H_{g,n}(2)} &\cong \Z^{\binom{n-1}{2}} \times \Sp(2g,\Z)~ \cong \frac{\H_{0,n}(1)}{\H_{0,n}(2)}\times \frac{\H_{g,1}(1)}{\H_{g,1}(2)} ,
		\\
		\frac{\H_{g,n}[1.5]}{\H_{g,n}(2)} &\cong \D_1(H')\times \Sp(2g,\Z) \cong \frac{\H_{0,n}}{\H_{0,n}(2)}\times \frac{\H_{g,1}}{\H_{g,1}(2)}.
	\end{align*}
\end{proposition}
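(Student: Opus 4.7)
The plan is to construct a section of the surjection $\H^0_{g,n}[1.5]/\H_{g,n}(2) \twoheadrightarrow \H^0_{g,n}[1.5]/\H^0_{g,n}[2]\cong \Sp(2g,\Z)$ using the mapping class group $\M_{g,1}$, and then to verify that the resulting extension is trivial, not merely split. For the section I would exploit the boundary connected sum decomposition $\Sigma_{g,n}=\Sigma_{0,n}\,\natural\,\Sigma_{g,1}$: a class $\varphi\in\M_{g,1}$ extends by the identity on the $\Sigma_{0,n}$ factor to $\tilde\varphi\in\M_{g,n}$, and the mapping cylinder $I_{\tilde\varphi}$ lies in $\H^0_{g,n}[1.5]$. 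Indeed, $I_{\tilde\varphi}$ is $0$-framed because each strand $x_i$ for $i<n$ lies in the untouched $\Sigma_{0,n}$ factor, and $\tilde\eta_2(I_{\tilde\varphi})$ acts as the identity on $H'=H_1(\Sigma_{0,n})$ and via the usual symplectic representation on $H''=H_1(\Sigma_{g,1})$, so it preserves the splitting $H=H'\oplus H''$.

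The most delicate step will be to show that the composite $\rho\colon \M_{g,1}\to \H^0_{g,n}[1.5]/\H_{g,n}(2)$ factors through the symplectic quotient $\Sp(2g,\Z)=\M_{g,1}/\mathrm{Tor}_{g,1}$, where $\mathrm{Tor}_{g,1}$ denotes the Torelli group. For this I would compute $\tilde\mu_2$ directly on $I_{\tilde\varphi}$ by deformation-retracting the mapping cylinder onto its top surface, which yields
\[
\mu_2(I_{\tilde\varphi})_i=[\alpha_i^{-1}\cdot \tilde\varphi(\alpha_i)],\quad \mu'_2(I_{\tilde\varphi})_j=[m_j^{-1}\cdot \tilde\varphi(m_j)],\quad \mu''_2(I_{\tilde\varphi})_j=[l_j^{-1}\cdot \tilde\varphi(l_j)]
\]
in $F/F_2=H$. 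For $\varphi\in \mathrm{Tor}_{g,1}$ the last two vanish because $\varphi_*=\mathrm{id}$ on $H''$, and the first vanishes because $\tilde\varphi$ fixes $\alpha_i\subset \Sigma_{0,n}$ pointwise (a choice that is available since $\alpha_i$ joins $\beta_i(0)\in\partial_i$ to $*\in\partial_n$ through the $\Sigma_{0,n}$ factor). Thus $\rho(\mathrm{Tor}_{g,1})\subset \H_{g,n}(2)$ and $\rho$ descends to a section $\bar\rho\colon\Sp(2g,\Z)\to \H^0_{g,n}[1.5]/\H_{g,n}(2)$; that it is a section of the projection to $\Sp(2g,\Z)$ is tautological from the definition of the symplectic representation.

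To upgrade the resulting split sequence to a direct product, I would analyze the conjugation action of $\bar\rho(\Sp(2g,\Z))$ on $\H^0[2]/\H(2)\cong \Z^{\binom{n-1}{2}}$. The crossed-homomorphism identity $\tilde\mu_2(MNM^{-1})=\tilde\eta_2(M)(\tilde\mu_2(N))$, valid when $\tilde\eta_2(N)=1$, reduces this to the action of $\tilde\eta_2(I_{\tilde\varphi})$ on $\mu_2(\H^0[2])\subset H^{n-1}$. Combining the vanishing of $\mu'_2,\mu''_2$ on $\H^0[2]$ with the boundary relation $[\partial_n]=\prod_i x_i \prod_j[m_j,l_j]$, which forces $\tilde\mu_2(\H^0[2])$ to lie in $\Ker \p_1$, shows that $\mu_2(\H^0[2])$ actually lies in $(H')^{n-1}$. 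Since $\tilde\eta_2(I_{\tilde\varphi})$ restricts to the identity on $H'$, the action is trivial and the first isomorphism $\H^0_{g,n}[1.5]/\H_{g,n}(2)\cong \Z^{\binom{n-1}{2}}\times \Sp(2g,\Z)$ follows. The second isomorphism is obtained by combining with $\H_{g,n}[1.5]\cong \T\times \H^0_{g,n}[1.5]$ and $\Z^{n-1}\times \Z^{\binom{n-1}{2}}\cong \Z^{\binom{n}{2}}\cong \D_1(H')$. I expect the Torelli computation in the middle step to be the main obstacle, since it requires careful bookkeeping with basepoints and with the retraction of $I_{\tilde\varphi}$ onto its top boundary; by contrast, the triviality-of-action argument is short once the image of $\mu_2|_{\H^0[2]}$ is pinned down to lie in $(H')^{n-1}$.
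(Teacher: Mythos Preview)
Your approach is correct and genuinely different from the paper's. The paper constructs a \emph{left} splitting (retraction) $f\colon \H^0[1.5]/\H(2)\to \H^0[2]/\H(2)$ by a purely combinatorial recipe: for each $M$ it strips from $\mu_2(M)_i$ the part expressible in the Hall-basis elements $[x_i,x_r]$ ($r>i$), $[x_i,m_j]$, $[x_i,l_j]$, and declares the remainder to be $\mu_2(f(M))_i$; the crossed-homomorphism formula together with the fact that $\tilde\eta_2(M)$ preserves the $x_i$-exponent for $M\in\H^0[1.5]$ shows $f$ is a homomorphism. Since a left splitting of a short exact sequence of groups automatically yields a direct product (both the kernel of the retraction and the image of the normal subgroup are normal, hence commute), no separate analysis of the conjugation action is needed.

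Your route instead builds a \emph{right} splitting geometrically from $\M_{g,1}$ via the boundary connected sum and then checks by hand that the resulting $\Sp(2g,\Z)$ centralizes $\H^0[2]/\H(2)$, using Lemma~\ref{lemma:D'} to pin $\mu_2(\H^0[2])$ down into $(H')^{n-1}$. This is a perfectly valid alternative. The trade-off: the paper's left splitting avoids the centrality check entirely but is less transparent about where the $\Sp(2g,\Z)$ factor actually comes from, while your construction makes that factor visibly the image of $\M_{g,1}$ at the cost of the extra conjugation computation. One small remark: the step you flag as ``the main obstacle'' (the Torelli computation) is in fact immediate once the arcs $\alpha_i$ are chosen inside the $\Sigma_{0,n}$ summand, since then $\tilde\varphi$ fixes them pointwise; the only place real work is hidden is in the appeal to Lemma~\ref{lemma:D'} (or equivalently the boundary relation at level~$3$) to see that $\mu_2(N)_i\in H'$.
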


\begin{proof}
	We define a map $f\colon \frac{\H^0[1.5]}{\H(2)} \to \frac{\H^0[2]}{\H(2)}$ so that 
	\begin{enumerate}
		\item $\mu_2'(f(M)), \mu_2''(f(M))$ are trivial, 
		\item $\prod_i [x_i,\mu_2(f(M))_i]=1$ in $F_2/F_3$, and 
		\item for each $i$, $\mu_2(f(M))_i^{-1} \mu_2(M)_i$ is represented by a product of $x_r,m_j,l_j$ with $r>i, 1\leq j \leq g$.
	\end{enumerate}
		
	Since $\prod_i [x_i,\mu_2(f(M))_i^{-1} \mu_2(M)_i]=\prod_i [x_i,\mu_2(f(M))_i^{-1}] [x_i,\mu_2(M)_i]=\prod_i [x_i,\mu_2(M)_i]$ is given and $\{[x_i, x_r],[x_i, m_j],[x_i,l_j]~|~ r>i, 1\leq j\leq g\}$ is a subset of a Hall basis for $F_2/F_3$, which is a basis for a subgroup of $F_2/F_3$ generated by $\{[x_i, z]~|~z\in F/F_2\}$, $\mu_2(f(M))_i^{-1} \mu_2(M)_i$ is uniquely determined as a product of $x_r, m_j, l_j$ for $r>i, 1\leq j \leq g$. Hence so is~$\mu_2(f(M))_i$. An element of $\frac{\H^0[2]}{\H(2)}$ is determined by $\mu_2(-)$. Therefore, $f$ is well-defined.
	
	To see that the map is a homomorphism, we should check that $\mu_2(f(M)f(N))=\mu_2(f(MN))$. 	It is enough to show that $\mu_2(f(M)f(N))_i^{-1} \mu_2(M)_i$ is represented by a product of $x_r,m_j,l_j$ for $r>i, 1\leq j \leq g$. In $\H^0[2]$, $\mu_2$ is a homomorphism.
	\begin{align*}
		\mu_2(f(M)f(N))_i^{-1} \mu_2(MN)_i &=\mu_2(f(N))_i^{-1}\mu_2(f(M))_i^{-1}\mu_2(M)_i \; \tilde\eta_2(M)(\mu_2(N)_i) \\
		&=\mu_2(f(M))_i^{-1}\mu_2(M)_i \; \mu_2(f(N))_i^{-1}\tilde\eta_2(M)(\mu_2(N)_i)
	\end{align*}
	is a product of $x_r,m_j,l_j$ for $r>i, 1\leq j \leq g$ since $\tilde\eta_2(M)$ preserves the powers of $x_i$ for $M\in\H^0[1.5]$. This homomorphism $f$ is a left splitting, so the given exact sequence splits.
\end{proof}

\begin{figure}
\[
\tikzcdset{arrow style=tikz, diagrams={>=stealth}}
\tikzcdset{every label/.append style = {font = \tiny}}
\begin{tikzcd}[column sep={12mm,between origins}, row sep={6mm,between origins}]
& [-4mm,between origins] 1 \ar{dddr} & [-4mm,between origins] & \Z^{\binom{n-1}{2}}\ar[dddl, equal] &&[4mm,between origins]&[-4mm,between origins] \Z^{\binom{n-1}{2}}\times\Sp(2g,\Z)\ar[ddd,equal] &[-4mm,between origins]&[4mm,between origins]& \Sp(2g,\Z)\ar[dddr,equal] &&[-4mm,between origins] \ar[from=dddl] 1 & [-4mm,between origins] \\ [-2mm,between origins]
1 \ar{ddrr} &&&&&&&&&&&& \ar[from=ddll] 1 \\ [-2mm,between origins] \\[-2mm,between origins]
1 \ar{rr} & &\displaystyle\frac{\H^0[2]}{\H(2)} 
\ar[hookrightarrow]{rrrr} 
\ar[hook]{rrrrdddd}
\ar[hookrightarrow
]{rrdddddd} 
&&&& \displaystyle\frac{\H^0[1.5]}{\H(2)} \ar[dashrightarrow, bend left=15, "\exists \textrm{ splitting}" yshift=-10,sloped]{llll} 
\ar[two heads]{rrrr}
\ar[hook]{dddd}[sloped,xshift=0, yshift=0]{\ntriangleleft}
&&&&\displaystyle\frac{\H^0[1.5]}{\H^0[2]}  
\ar[dashrightarrow, bend left=15, "\exists \textrm{ splitting}" yshift=-10, sloped]{llll}
\ar[dashrightarrow, bend left=15, "\exists \textrm{ splitting}" yshift=-10, sloped]{ddddllll}
\ar[dashrightarrow, bend left=15, "\exists \textrm{ splitting}" yshift=-10, sloped]{ddddddll}
\ar{rr} && 1
\\ \\
\\ \\
&&&&&&
\displaystyle\frac{\H(1)}{\H(2)} \ar[two heads]{uuuurrrr}
\ar[two heads]{ddrr} \ar[dotted, bend left=15, "\nexists \textrm{ splitting}" yshift=-10,sloped]{ddll}
\ar[dotted, bend left=15, "\nexists \textrm{ splitting}" yshift=-10,sloped]{lllluuuu}
\\ \\
&&&&
K \ar[hookrightarrow]{uurr}
\ar[two heads]{rrdddddd} \ar[dotted, bend left=15, "\nexists \textrm{ splitting}" yshift=-10,sloped]{lluuuuuu}
&&&&
\displaystyle\frac{\H(1)}{\H^0[2]} \ar[two heads]{uuuuuurr} \ar[dotted, bend left=15, "\nexists \textrm{ splitting}" yshift=-10,sloped]{uull} \ar[dotted, bend left=15, "\nexists \textrm{ splitting}" yshift=-10,sloped]{ddddddll}
\ar[dr] && \Aut^*(H)\qquad\quad\ar[ll, equal]
\\  &&& 1 \ar[ur] &&&&&& 1
\\ \\  \\
\\ \\
&&&&&&
{\bf{M}}_{(n-1)\times 2g}(\Z) 
\ar[hookrightarrow]{uuuuuurr}
\ar[dotted, bend left=15, "\nexists \textrm{ splitting}" yshift=-10,sloped]{lluuuuuu} &&\Z^{(n-1)2g}\quad \ar[ll,equal]
\\   [6mm,between origins]
&&&&& 1\ar[ur] && 1\ar[from=ul]
\end{tikzcd}
\]
\caption{}
\label{figure:first level}
\end{figure}

To sum up, we have the diagram in Figure~\ref{figure:first level}. Here, $K$ denotes the kernel of $\frac{\H(1)}{\H(2)} \to \frac{\H^0[1.5]}{\H^0[2]}$. All sequences in a straight line are part of short exact sequences that the trivial terms are omitted. But, only those which end with the top right term $\frac{\H^0[1.5]}{\H^0[2]}$ are right split, and the top horizontal sequence is the only one which splits.

\section{Proofs} \label{sec:proofs}
\begin{proof}(Proof of Theorem~\ref{theorem:surjections})
	\begin{enumerate}
		\item
		To prove the surjectivity of the map $\tilde\eta_l\colon \H \to \Aut_*(F/F_l)$, we mainly follow the proof of \cite[Theorem~3]{GL}, and simplify a part of it.
		Suppose $\phi \in \Aut_*(F/F_l)$. We can construct maps $f_\pm \colon \Sigma \to K(F/F_l,1)$ so that $(f_+)_\#\colon \pi_1(\Sigma)=F\to F/F_l$ is the natural surjection and $(f_-)_\#= \phi\circ (f_+)_\#$.
		We may assume $f_+|_{\partial \Sigma} = f_-|_{\partial \Sigma}$ since $f_+|_{\partial_i}$ and $f_-|_{\partial_i}$ are freely homotopic for $i=1,\dots, n-1$ and $f_+|_{\partial_n}$ and $f_-|_{\partial_n}$ are homotopic.
		Let $f=f_+\cup f_-\colon \Sigma \cup_\partial \Sigma \to K(F/F_l,1)$. We claim that there are a compact oriented 3-manifold $M$ bounded by $\Sigma\cup_\partial \Sigma$ and a map $\Phi\colon M\to K(F/F_l,1)$ extending~$f$. The obstruction is $[f]\in \Omega_2(F/F_l)\cong H_2(F/F_l)$. 
		Using a lift $\tilde\phi \in \Aut(F/F_{l+1})$ of $\phi$, we can also consider $\tilde f \colon \Sigma\cup_\partial \Sigma \to K(F/F_{l+1},1)$ as above which factors through~$f$. Since $H_2(F/F_{l+1})\to H_2(F/F_l)$ is the 0-map, $[f]=0 \in H_2(F/F_l)$. Thus, we have $M$ and~$\Phi$.
	
		We will do surgery $M$ to be a homology cylinder over $\Sigma$ by killing $\Ker\{\Phi_*:H_1(M)\to H\}$. As \cite[Lemma~4.6]{GL}, for $\alpha\in\Ker\Phi_*$, there exists $\bar\alpha \in \pi_1(M)$ such that $\bar\alpha \in \Ker\{\Phi_\#\colon \pi_1(M)\to F/F_l\}$ and $\bar\alpha$ represents $\alpha$. Thus, we can surger $M$ along a curve representing $\alpha$.
		Note that the surgery on an embedding $\varphi\colon S^1\times D^2 \to \textrm{(interior of }M)$ induces $H_1(M)/\langle \alpha \rangle \cong H_1(M')/\langle \beta \rangle$ where $\alpha=[\varphi(S^1\times 1)]$, $\beta =[\varphi(1\times S^1)]$, and $M'$ is the resulting manifold \cite[Lemma~5.6]{KM}. 
	
	 As a first step, kill the torsion-free part of $\Ker\Phi_*$. Choose $\alpha \in \Ker\Phi_*$ which maps to a primitive element of $H_1(M,\partial M)$. It is possible since $H_1(M)\cong H \oplus \Ker\Phi_*$ and $\Ker\Phi_* \to H_1(M,\partial M)$ is surjective. Let $C$ be a simple closed curve in the interior of $M$ representing~$\alpha$. Surgery on $C$ results in a new manifold $M'$ with $H_1(M')\cong H_1(M)/\langle\alpha\rangle$. After a sequence of such surgeries, $H_1(M,\partial M)$ becomes a torsion group. By calculating ranks of the groups in the long exact sequence of homology groups of the pair $(M,\partial M)$, we conclude that $\rank H_1(M)=\rank H$ and $\rank \Ker\Phi_* =0$.
	 
	Now $\Ker\Phi_*=\Tor H_1(M) \cong \Tor H_1(M,\partial M)= H_1(M,\partial M)$, so the linking pairing $l\colon \Ker\Phi_* \otimes \Ker\Phi_* \to \Q/\Z$ is non-singular. 
	\begin{lemma}
		Suppose $M$ is a 3-manifold whose linking pairing $\Tor H_1(M)\otimes \Tor H_1(M)\to \Q/\Z$ is non-singular. If $\alpha$ is a torsion element in $H_1(M)$, then there is a surgery along a simple closed curve $C$ representing $\alpha$ which reduces $|\Tor H_1(M)|$, that is, $|\Tor H_1(M')| < |\Tor H_1(M)|$ where $M'$ is the resulting manifold.
	\end{lemma}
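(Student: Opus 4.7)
The plan is: take any simple closed curve $C$ representing $\alpha$, set $M^\circ := M \setminus \mathrm{int}\,\nu C$ and $A := H_1(M^\circ)$, and exhibit an integer framing that makes the resulting surgery strictly decrease $|\Tor H_1|$. A Mayer--Vietoris computation on the decomposition $M = M^\circ \cup \nu C$ gives $H_1(M) = A/\langle\mu\rangle$ where $\mu \in A$ is the meridian of $C$, and $H_1(M_j') = A/\langle\lambda + j\mu\rangle$ for the surgery manifold $M_j'$ with integer framing $j$, where $\lambda \in A$ is the preferred longitude coming from a $2$-chain $F$ with $\partial F = m C$ (and $m$ the order of $\alpha$). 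Poincar\'e--Lefschetz duality, combined with the non-singular linking form hypothesis, forces $\mu$ to have infinite order in $A$.

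Transverse intersection of $F$ with $C$ yields a relation $m\lambda + k\mu = 0$ in $A$ with $-k/m \equiv l(\alpha, \alpha) \pmod{\Z}$. Setting $Q := A/\langle\mu, \lambda\rangle$, we obtain two parallel short exact sequences
\[
0 \to \Z/m \to H_1(M) \to Q \to 0, \qquad 0 \to \langle\mu,\lambda\rangle/\langle\lambda + j\mu\rangle \to H_1(M_j') \to Q \to 0,
\]
so $|\Tor H_1(M)| = m \cdot |\Tor Q|$, and the problem reduces to choosing $j$ so that the leftmost group in the second sequence contributes strictly less than a factor of $m$ to the torsion.

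There are two cases. If $l(\alpha, \alpha) \ne 0 \in \Q/\Z$, then $m \nmid k$, so one picks $j$ with $0 < |k + jm| < m$ to obtain $|\Tor H_1(M_j')| = |k + jm| \cdot |\Tor Q| < m \cdot |\Tor Q| = |\Tor H_1(M)|$. If $l(\alpha, \alpha) = 0$, one sets $j = -k/m \in \Z$, so that $\lambda + j\mu$ becomes a torsion element of $A$; then $H_1(M_j')$ gains a free $\Z$-summand, while its torsion satisfies $|\Tor H_1(M_j')| \le |\Tor Q| < m \cdot |\Tor Q| = |\Tor H_1(M)|$, using $m \ge 2$. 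The main obstacle is the self-linking-zero case: establishing the bound $|\Tor H_1(M_j')| \le |\Tor Q|$ requires a careful torsion analysis of $A$ and the torsion element $\lambda + j\mu$, and relies essentially on $\mu$ having infinite order in $A$, which is precisely where the non-singular linking form hypothesis enters.
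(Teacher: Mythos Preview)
Your argument is correct and follows exactly the strategy the paper indicates. The paper does not give a detailed proof of this lemma; immediately after stating it, it only records the refinement into two cases: when $l(\alpha,\alpha)\ne 0$ one chooses the framing so that the dual core $\beta$ has strictly smaller order than $\alpha$ in $H_1(M')$, and when $l(\alpha,\alpha)=0$ one chooses the framing so that $\beta$ has infinite order, producing a free summand (to be removed afterwards by the earlier ``first step'' surgery). Your Mayer--Vietoris setup, the parallel short exact sequences over $Q=A/\langle\mu,\lambda\rangle$, and your framing choices in the two cases are precisely a fleshed-out version of that sketch; in particular your Case~2 shows directly that the single surgery already strictly decreases $|\Tor H_1|$, which is what the lemma asserts.

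One minor remark on your last paragraph: the fact that the meridian $\mu$ has infinite order in $A=H_1(M^\circ)$ follows already from the long exact sequence of the pair $(M,M^\circ)$ together with Poincar\'e--Lefschetz duality and the hypothesis that $\alpha\in H_1(M)$ is torsion (the connecting map $H_2(M,M^\circ)\cong\Z\to H_1(M^\circ)$ hits $\mu$, and the preceding map $H_2(M)\to\Z$ is intersection with $[C]$, which vanishes on torsion classes). So the non-singularity of the linking form is not really the mechanism that forces $\mu$ to be of infinite order; it is rather what makes the case split via $l(\alpha,\alpha)$ meaningful and well-defined. This does not affect the correctness of your argument.
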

	
	More precisely, if $l(\alpha,\alpha)\neq 0$, then there is a normal framing on $C$ such that the order of $\beta$ in $H_1(M')$ is less than that of $\alpha$  in $H_1(M)$, and if $l(\alpha,\alpha)= 0$, then there is a normal framing of $C$ such that the order of $\beta$ in  $H_1(M')$ is infinite. In the latter case, the surgery generates a torsion-free summand. By applying the first step above, we can kill it.
	Hence, after a sequence of surgeries, $\Phi_*\colon H_1(M)\to H$ becomes an isomorphism. 
	
	Let $i_\pm\colon \Sigma\to M$ be the restrictions of $\Sigma\cup_\partial \Sigma \to M$ so that $f \circ i_\pm = f_\pm$. We check that $(M,i_+,i_-)$ is a homology cylinder over $\Sigma$; Clearly, $i_\pm\colon \Sigma\to M$ satisfies $i_+|_\partial=i_-|_\partial$. Since $\Phi_*$ and $f_{\pm*}$ are isomorphisms on $H_1(-)$, so are $i_{\pm*}$ on $H_1(-)$. It follows that $i_\pm$ also induce isomorphisms on $H_2(-)$ from $H_1(M,\partial M)=0$ and $H_2(M)=0$.
	
	To prove the surjectivity of $\H^0\to \Aut_*(F/F_l)$, we will find a 0-framed homology cylinder from the above $M$ which also maps to $\phi$ under $\tilde\eta_l$. Let $t_i:=\bar\mu_2(M)_i \in \Z$ and $\tau=\tau_{n-1}^{t_{n-1}}\circ \cdots \circ \tau_1^{t_1}$ where $\tau_i$ is a Dehn twist along~$\partial_i$. We look at $M':=(M, i_+, i_-\circ \tau)$. For each $i=1,\dots, n-1$, $\mu_2(M')_i = x_i^{-t_i} \mu_2(M)_i$, so $\bar\mu_2(M')_i=0$ and hence $M'\in\H^0$. We have $\tilde\eta_l^0(M')=\tilde\eta_l(M)=\phi$.

	\item
	$(\tilde\eta^0_l)^{-1}(\K_{l,k})=\H^0 \cap \H[k]=\H^0[k]$.
	
	\item
	Suppose $M$ is in $\H^0[k]$. If $\tilde\eta^0_l(M)\in\A_{l,k}$, then $\tilde\eta_{k+1}(M)(x_i)=\mu_k(M)_i^{-1} x_i \mu_k(M)_i = x_i$, hence $[x_i, \mu_k(M)_i]=1$ in $F/F_{k+1}$. It implies $\mu_k(M)_i =x_i^t\in F/F_k$ for some $t\in\Z$. Since $M$ is in $\H^0$, $\mu_k(M)_i$ is trivial. The converse is also true. Hence, $(\tilde\eta^0_l)^{-1}(\A_{l,k}) = \H^0[k] \cap \{M\in\H^0~|~\mu_k(M)=1\} = \H(k)$. 
 	\end{enumerate}
\end{proof}

Before proving Theorem~\ref{theorem:rank}, we present the proof of Lemma~\ref{lemma:D'} and some ingredients.

\begin{proof}(Proof of Lemma~\ref{lemma:D'})
	Naturally, $\Ker \p'_k \to \Ker\p_k \cap \{(F_k/F_{k+1})^{n-1} \times 0^{2g}\}$ is an injective homomorphism. We check that it is an isomorphism. Let $\alpha_i \in F_k/F_{k+1}$ for $i=1,\dots, n-1$ satisfying $\prod_i [x_i,\alpha_i]=1 \in F_{k+1}/F_{k+2}$. We will show that $\alpha_i$ is in $F'_k/F'_{k+1}$ for all~$i$. 
	
	Consider the Magnus expansion of $F$ into the algebra of formal power series in noncommutative variables
$$\mathfrak{M}\colon F\hooklongrightarrow \Z[[X_1,\ldots,X_{n-1},M_1,\ldots,M_g, L_1,\ldots,L_g]]$$
which sends $x_i, m_j, l_j$ of $F$ to $1+X_i, 1+M_j, 1+L_j$ respectively. It is well known that $a\in F_k$ if and only if $\mathfrak{M}(a)- 1$ is a sum of monomials of degree at least $k$; see \cite[Section~5]{MKS}.
Denote by $P_k$ the set of homogeneous polynomials of degree $k$ in $\Z[[X_1,\ldots,X_{n-1},M_1,\ldots,M_g, L_1,\ldots,L_g]]$.
We have an isomorphism
$\mathfrak{M}_k\colon F_k/F_{k+1}\to P_k$ which sends $a F_{k+1}\in F_k/F_{k+1}$ to the homogeneous part of degree $k$ in $\mathfrak{M}(a)-1$ (see \cite[Corollary~5.12]{MKS}). Let $\mathfrak{M}_k(\alpha_i)=h_i$. The inverse image of $P_k \cap \Z[[X_1,\ldots, X_{n-1}]]$ under the above map is~$F'_k/F'_{k+1}$. Hence it is enough to show that $h_i$ is in $\Z[[X_1,\ldots, X_{n-1}]]$.

 By calculation, $\mathfrak{M}_{k+1}([x_i,a_i]) = X_i h_i- h_i X_i$, and then,
 $\mathfrak{M}_{k+1}(\prod_i [x_i,a_i])=\sum_i(X_i h_i- h_i X_i)=0$ since $\prod_i [x_i,\alpha_i] = 1$.
For a monomial $z$ in the $X_i,M_j,L_j$, define
\begin{displaymath}
l(z)=\left\{ \begin{array}{ll}
	t & \textrm{if $z=uyv$ where $v$ is a monomial in the $X_i$ of degree $t$,} \\
	& \hphantom{\textrm{if $z=uyv$ where }} \textrm{$y\in\{M_1,\ldots,M_g,L_1,\ldots,L_g\}$,} \\ 
	& \hphantom{\textrm{if $z=uyv$ where }} \textrm{$u$ is a monomial in the $X_i,M_j,L_j$,} \\
	\infty & \textrm{if $z$ is a monomial in the $X_i$.}
	\end{array} \right.
\end{displaymath}
i.e. $l(z)$ is the length of the rightmost submonomial written in the $X_i$ only.
Write $h_i=\sum_\alpha c_{i\alpha} z_{i\alpha}$, where $c_{i\alpha}\in \Z$, $z_{i\alpha}$ is a monomial. To show that $l(z_{i\alpha})=\infty$ for all $i,\alpha$, suppose $l(z_{i\alpha})< \infty$ for some $i, \alpha$. Let $l:=\min_{i,\alpha}\{l(z_{i\alpha})\}$. Then $l$ is finite. Consider a term $c_{{i_0}{\alpha_0}} X_{i_0} z_{{i_0} {\alpha_0}}$ with $l(z_{{i_0}{\alpha_0}})=l$ in the expression
$$\sum_{i,\alpha} c_{i\alpha} X_i z_{i\alpha} - \sum_{i,\alpha} c_{i\alpha} z_{i\alpha} X_i =0.$$
The term $c_{{i_0}{\alpha_0}} X_{i_0} z_{{i_0}{\alpha_0}}$ is not eliminated with any other term of the form $c_{i\alpha} z_{i\alpha} X_i$ since $l(X_{i_0} z_{{i_0}{\alpha_0}})<l(z_{i\alpha} X_i)$. Also, $c_{{i_0}{\alpha_0}}X_{i_0} z_{{i_0}{\alpha_0}}$ is not eliminated with any other term of the form $c_{i\alpha} X_i z_{i\alpha}$ if either $i\neq {i_0}$ or $l(z_{i\alpha})\neq l$.
It follows that $\sum_{\{{\alpha} | l(z_{{i_0}{\alpha}})=l\}} c_{{i_0}{\alpha}} X_{i_0} z_{{i_0}{\alpha}} = 0$, and so $\sum_{\{{\alpha} | l(z_{{i_0}{\alpha}})=l\}} c_{{i_0}{\alpha}} z_{{i_0}{\alpha}} = 0$. Thus, all the terms in $h_i$ with $l(-)=l$ are eliminated in~$h_i$. This contradicts the choice of~$l$.
\end{proof}

\begin{lemma} \phantomsection
\label{lemma:keys} 	
\begin{enumerate}	
		\item For each $i$, the map $F_k/F_{k+1} \to F_{k+1}/F_{k+2}$ which sends $\alpha$ to $[x_i,\alpha]$ is an injective homomorphism if $k\geq2$. If $k=1$, then the restriction map $H_1(\langle x_{i'}, m_j, l_j~|~i'\neq i \rangle) \to F_2/F_3$ is injective.
		\item Let $\phi\in\A_{k+2,k}$. Then $\phi(x_i)=(\alpha_i^\phi)^{-1} x_i \alpha_i^\phi$, $\phi_{k+1}(m_j)=m_j \beta_j^\phi$, $\phi_{k+1}(l_j)=l_j \gamma_j^\phi$ for some $\alpha_i^\phi ,\beta_j^\phi,\gamma_j^\phi \in F_k/F_{k+1}$. The formula $\phi(\prod x_i \prod [m_j,l_j])=\prod x_i \prod [m_j,l_j]$ implies 
		$$\prod [x_i,\alpha_i^\phi] \prod [m_j,\gamma_j^\phi][\beta_j^\phi,l_j][\beta_j^\phi,\gamma_j^\phi]=1 \textrm{ in }F/F_{k+2}. $$
			If $k\geq 2$, $[\beta_j^\phi,\gamma_j^\phi]=1$ in $F/F_{k+2}$.
		\item For a group $G$, if an automorphism of $G/G_k$ induces the identity on $G/G_l$ for some $l<k$, then it is also identity on $G_{1+\alpha}/G_{l+\alpha}$ for any $\alpha \leq k-l$.	\end{enumerate}
\end{lemma}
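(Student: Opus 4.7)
The three parts can be treated independently. For part (1), my plan is to use the classical fact (visible via the Magnus expansion $\mathfrak{M}$ from the proof of Lemma~\ref{lemma:D'}) that $\bigoplus_k F_k/F_{k+1}$ is the free Lie algebra $\L(H)$ on $H$. In a free Lie algebra the centralizer of a degree-$1$ element $y$ is the subalgebra it generates, whose degree-$k$ part is $0$ for $k\geq 2$ and $\Z\cdot y$ for $k=1$. Hence $\mathrm{ad}(x_i)\colon \L_k(H)\to\L_{k+1}(H)$ is injective for $k\geq 2$, and for $k=1$ has kernel exactly $\Z\cdot x_i$. Since $H_1(\langle x_{i'}, m_j, l_j \mid i'\neq i\rangle)$ is the direct summand of $H$ complementary to $\Z\cdot x_i$, the restriction to it is injective. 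Equivalently, $\mathfrak{M}_{k+1}([x_i,\alpha]) = X_i h - h X_i$ where $h=\mathfrak{M}_k(\alpha)$, and the equation $X_i h - h X_i = 0$ in the free associative algebra forces $h$ to be a polynomial in $X_i$ alone, which for $k\geq 2$ does not arise as $\mathfrak{M}_k$ of a nontrivial element of $F_k/F_{k+1}$.

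For part (2), the first assertion that $\alpha_i^\phi,\beta_j^\phi,\gamma_j^\phi$ can be chosen in $F_k/F_{k+1}$ is immediate from $\phi\in\K_{k+2,k}$ with $\phi_{k+1}(x_i)=x_i$, using the lift afforded by the definition of $\Aut_*(F/F_{k+2})$. For the displayed identity, my plan is a direct commutator computation in $F/F_{k+2}$. Since $\alpha_i^\phi,\beta_j^\phi,\gamma_j^\phi\in F_k$ and $[F_1,F_{k+1}]\subseteq F_{k+2}$, conjugation of any $F_{k+1}$-element by an $F_1$-element is trivial modulo $F_{k+2}$. The standard identities $[xy,z]=[x,z]^y[y,z]$ and $[x,yz]=[x,y][x,z]^y$ then give
\[
\phi(x_i)\equiv x_i\,[x_i,\alpha_i^\phi], \qquad \phi([m_j,l_j])\equiv [m_j,l_j]\,[m_j,\gamma_j^\phi]\,[\beta_j^\phi,l_j]\,[\beta_j^\phi,\gamma_j^\phi]\pmod{F_{k+2}}.
\]
All correction factors lie in the central quotient $F_{k+1}/F_{k+2}$, so they commute freely with the generators and with each other. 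Multiplying out $\phi\bigl(\prod_i x_i \prod_j [m_j,l_j]\bigr) = \prod_i x_i \prod_j [m_j,l_j]$ and canceling the leading $\prod_i x_i\prod_j[m_j,l_j]$ yields the claimed relation. When $k\geq 2$, the last factor $[\beta_j^\phi,\gamma_j^\phi]\in F_{2k}\subseteq F_{k+2}$ is trivial.

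For part (3), my plan is induction on $\alpha$, with base case $\alpha=0$ being the hypothesis. For the inductive step, a generator of $G_{1+\alpha}=[G,G_\alpha]$ has the form $[a,b]$ with $a\in G$ and $b\in G_\alpha$. The hypothesis gives $\phi(a)=a\,u_a$ with $u_a\in G_l$, and induction applied to $b\in G_{1+(\alpha-1)}$ gives $\phi(b)=b\,u_b$ with $u_b\in G_{l+\alpha-1}$. Expanding $\phi([a,b])=[au_a,bu_b]$ via commutator identities, every correction term ($[a,u_b]$, $[u_a,b]$, $[u_a,u_b]$, and their conjugates) lies in $G_{l+\alpha}$, since $[G_1,G_{l+\alpha-1}]$, $[G_l,G_\alpha]$, and $[G_l,G_{l+\alpha-1}]\subseteq G_{2l+\alpha-1}$ are all contained in $G_{l+\alpha}$ for $l\geq 1$. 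Hence $\phi([a,b])\equiv [a,b]\pmod{G_{l+\alpha}}$, completing the induction. The main technical obstacle will be the commutator bookkeeping in part (2), but it reduces to abelian arithmetic in $F_{k+1}/F_{k+2}$ once one observes that every conjugation correction on an $F_{k+1}$-element is absorbed into $F_{k+2}$.
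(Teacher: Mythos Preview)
Your proposal is correct and follows essentially the same route as the paper. Part~(1) is the paper's Magnus-expansion argument (you also phrase it equivalently via centralizers in the free Lie algebra $\L(H)$, which is fine); part~(3) is the same induction on~$\alpha$ with the same commutator bookkeeping. For part~(2) the paper simply writes ``We omit the calculation,'' so your expansion of $\phi([m_j,l_j])$ modulo $F_{k+2}$ and the observation $[\beta_j^\phi,\gamma_j^\phi]\in F_{2k}\subseteq F_{k+2}$ for $k\ge 2$ supply exactly the details the paper skips.
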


\begin{proof}
	\begin{enumerate}
		\item We use the Magnus expansion as in the proof of Lemma~\ref{lemma:D'}. For $\alpha\in F_k/F_{k+1}$, let $\mathfrak{M}_k(\alpha)=h$. Then $\mathfrak{M}_{k+1}([x_i,\alpha])=X_i h- h X_i$. Suppose $[x_i,\alpha]=1$ in $F_{k+1}/F_{k+2}$, then $X_i h- h X_i=0$. A similar argument to the proof gives that $h$ is a polynomial in only $X_i$ with the given~$i$. It implies $\alpha\in \langle x_i \rangle$. If $k\geq 2$, $\alpha=0$. If $k=1$, $\alpha= x_i^t$ for some $t\in\Z$.
		\item We omit the calculation.
		\item Use induction on $\alpha$. From the hypothesis, it is true for $\alpha=0$. Assume that $\phi_{l+\alpha-1}|_{G_\alpha/G_{l+\alpha-1}} = \id$. It is enough to check that $\phi_{l+\alpha}([g,h])=[g,h]$ for $g\in G, h \in G_\alpha$. Let $\phi_{l+\alpha}(g)=ga$ with $a\in G/G_{l+\alpha}$. Then $a$ is in $G_l/G_{l+\alpha}$. Also, $\phi_{l+\alpha}(h)=hb$ for some $b \in G_{l+\alpha-1}/G_{l+\alpha}$. Applying commutators identityes, we see that $\phi_{l+\alpha}([g,h])=[ga,hb]=[ga,b][ga,h][[ga,h],b]=[ga,h]=[g,h][[g,h],a][a,h]=[g,h]$ in~$G/G_{l+\alpha}$.
	\end{enumerate}
\end{proof}
 
\begin{remark}
	For $M\in\H^0$, the boundary condition $\tilde\eta_2(M)([\partial_n])=[\partial_n]$ implies $\prod_j [m_j,\mu''_2(M)_j][\mu'_2(M)_j, \mu''_2(M)_j][\mu'_2(M)_j,l_j]=1 \in F_2/F_3$. The term $[\mu'_k(M)_j, \mu''_k(M)_j]$ does not vanishes in $F_k/F_{k+1}$ for the case $k=2$, differently from the other~$k$.
It is the cause of failure for $(\mu'_2,\mu''_2)$ to be a homomorphism on $\H(1)/\H^0[2]$ and that for $\H(1)/\H(2) \to \D_1(H)$ to be defined when $g\neq 0$.
\end{remark}

\begin{proof}(Proof of Theorem~\ref{theorem:Milnor} and Theorem~\ref{theorem:rank})
	We will prove that there is a surjection $\Theta \colon \A_{k+2,k}\to \Ker\p_k$ such that $\Theta \circ \tilde\eta_{k+2} =\tilde\mu_{k+1}$ on~$\H(k)$.
	
	Suppose  $k\geq 2$ and $\phi\in\A_{k+2,k}$.
	The $\beta_j^\phi,\gamma_j^\phi \in F_k/F_{k+1}$ in Lemma~\ref{lemma:keys}(2) are uniquely determined. If we consider $x_i^{-1} \phi(x_i)=[x_i^{-1}, (\alpha_i^{\phi})^{-1}]=[x_i, \alpha_i^\phi]\in F_{k+1}/F_{k+2}$, Lemma~\ref{lemma:keys}(1) implies that $\alpha_i^\phi$ is uniquely determined in $F_k/F_{k+1}$ for $k\geq 2$. By Lemma~\ref{lemma:keys}(2), $(\alpha_1^\phi,\ldots,\alpha_{n-1}^\phi,\beta_1^\phi,\ldots,\beta_g^\phi, \gamma_1^\phi,\ldots,\gamma_g^\phi)$ is in $\Ker\p_k$.
	Hence, we can define a map
	\begin{align*}
		\A_{k+2,k} &\xrightarrow{\Theta} \Ker\p_k \subset(F_k/F_{k+1})^{2g+n-1}\\
		\phi			&\longmapsto					(\alpha_1^\phi,\ldots,\alpha_{n-1}^\phi,\beta_1^\phi,\ldots,\beta_g^\phi, \gamma_1^\phi,\ldots,\gamma_g^\phi). 
	\end{align*}
The relation (\ref{eqn:relation}) in Section~\ref{subsec:Milnor} implies $\Theta \circ \tilde\eta_{k+2} =\tilde\mu_{k+1}$ on~$\H(k)$.
	
	To see the surjectivity of $\Theta$, let $\theta = (\alpha_1,\ldots,\alpha_{n-1},\beta_1,\ldots,\beta_g, \gamma_1,\ldots,\gamma_g)\in\Ker\p_k$ with $\alpha_i, \beta_j, \gamma_j \in F_k/F_{k+1}$.
	We show that there are lifts $\tilde\alpha_i, \tilde\beta_j, \tilde\gamma_j \in F_k/F_{k+2}$ of $\alpha_i,\beta_j,\gamma_j$ such that the automorphism $\phi$ of $F/F_{k+2}$ sending $x_i,m_j,l_j$ to $\alpha_i^{-1} x_i \alpha_i, m_j \tilde\beta_j,l_j \tilde\gamma_j$ respectively is in $\Aut_*(F/F_{k+2})$ with a lift $\tilde\phi\in\Aut(F/F_{k+3})$ sending $x_i$ to $\tilde\alpha_i^{-1} x_i \tilde\alpha_i$.
	In detail, let $\tilde\alpha_i = \alpha_i a_i$, $\tilde\beta_j=\beta_j b_j$, $\tilde\gamma_j= \gamma_j c_j$ with $a_i, b_j, c_j \in F_{k+1}/F_{k+2}$.
 	 Then $\tilde\phi(x_i)=a_i^{-1} \phi(x_i) a_i$, $\phi(m_j)=\phi_{k+1}(m_j)b_j$, $\phi(l_j)=\phi_{k+1}(l_j)c_j$. The $\tilde\phi([\partial_n])$ is computed as
	\begin{align*}
		\tilde\phi&\bigg(\prod x_i \prod [m_j,l_j]\bigg)\\
			&=\prod a_i^{-1} \phi(x_i) a_i \prod [\phi_{k+1}(m_j)b_j, \phi_{k+1}(l_j)c_j]	\\
			&=\prod \phi(x_i) [\phi(x_i)^{-1},a_i^{-1}] \prod [\phi_{k+1}(m_j),\phi_{k+1}(l_j)] [\phi_{k+1}(m_j), c_j] [b_j, \phi_{k+1}(l_j)] [b_j, c_j] \\
			&=\prod \phi(x_i) [\alpha_i^{-1} x_i^{-1}\alpha_i ,a_i^{-1}]  \prod [\phi_{k+1}(m_j),\phi_{k+1}(l_j)] [m_j \beta_j, c_j] [b_j, l_j \gamma_j] [b_j, c_j] \\
			&=\prod_i \phi(x_i) \prod_j [\phi_{k+1}(m_j),\phi_{k+1}(l_j)] \prod_i [x_i^{-1}, a_i^{-1}] \prod_j [m_j,c_j][\beta_j,c_j][b_j, l_j][b_j,\gamma_j][b_j,c_j] \\
			&=\prod_i \phi(x_i) \prod_j [\phi_{k+1}(m_j),\phi_{k+1}(l_j)] \prod_i [x_i, a_i] \prod_j [m_j,c_j][b_j, l_j] \quad \textrm{ if } k\geq 2\\
			&=\prod_i x_i \prod_j [m_j,l_j] \cdot \xi \cdot \prod_i [x_i, a_i] \prod_j [m_j,c_j][b_j, l_j] \quad \textrm{ for some } \xi \in F_{k+2}/F_{k+3}. 
	\end{align*} 
	Since any element in $F_{k+2}/F_{k+3}$ can be represented by $\prod [x_i, a_i] \prod [m_j,c_j][b_j,l_j]$ for $a_i, b_j, c_j \in F_{k+1}/F_{k+2}$, we can choose $a_i, b_j, c_j$ so that $\xi^{-1}=\prod [x_i,a_i] \prod [m_j,c_j][b_j,l_j]$.
		Thus $\tilde\phi$ fixes $[\partial_n]$, and hence $\phi\in \Aut_*(F/F_{k+2})$.
		The elements $\alpha_i,\beta_j,\gamma_j \equiv 1$ mod $F_k$, so $\phi_{k+1}(x_i)=x_i, \phi_k(m_j)=m_j, \phi_k(l_j)=l_j$, and $\phi \in\A_{k+2,k}$.
		Therefore, $\Theta(\phi)=\theta$.
		
		Lastly, we show that $\Theta$ is a homomorphism. For $\phi, \psi \in \A_{k+2,k}$,
		$$\Theta(\phi)+\Theta(\psi)=(\alpha_1^\phi \alpha_1^\psi,\ldots,\alpha_{n-1}^\phi \alpha_{n-1}^\psi,\beta_1^\phi\beta_1^\psi,\ldots,\beta_g^\phi\beta_g^\psi,\gamma_1^\phi\gamma_1^\psi,\ldots,\gamma_g^\phi\gamma_g^\psi).$$
		
		By Lemma~\ref{lemma:keys}~(3), $\psi_{k+1}|_{F_2/F_{k+1}}=\id$ follows from $\psi_k=\id$. Hence,
		\begin{align*}
			(\psi\circ\phi)(x_i)&=\psi((\alpha_i^\phi)^{-1} x_i \alpha_i^\phi)\\
									&=\psi_{k+1}((\alpha_i^\phi)^{-1}) (\alpha_i^\psi)^{-1} x_i \alpha_i^\psi \psi_{k+1}(\alpha_i^\phi)	\\
									&= (\alpha_i^\phi)^{-1}(\alpha_i^\psi)^{-1} x_i \alpha_i^\psi \alpha_i^\phi
		\end{align*}
		We obtain $\alpha_i^{\psi\circ\phi} = \alpha_i^\psi \alpha_i^\phi$. Also,
		\begin{align*}
			(\psi\circ\phi)_{k+1}(m_j)&=\psi_{k+1}(m_j\beta_j^\phi) \\
									&=m_j \beta_j^\psi \psi_{k+1}(\beta_j^\phi) \\
									&=m_j \beta_j^\psi \beta_j^\phi
		\end{align*}
		since $\psi_{k+1}|_{F_2/F_{k+1}}=\id$. So, $\beta_i^{\psi\circ\phi} = \beta_i^\psi \beta_i^\phi$.
		Similarly for $l_j$.
		Since $F_k/F_{k+1}$ is abelian, $\Theta(\phi)+\Theta(\psi)=\Theta(\psi\circ\phi)$.
		
		To show the surjectivity of $\H^0[k+1]\to\Ker\p'_k$ for $k\geq 2$, we observe~$\Theta(\K_{k+2,k+1})$.
For $\phi \in \K_{k+2,k+1}$, $\alpha_i^\phi$ is uniquely determined for each $i$ if $k\geq 2$ and $\beta_i^\phi$, $\gamma_i^\phi$ are trivial in $F_k/F_{k+1}$, hence the $\Theta(\phi)$ is in $(F_k/F_{k+1})^{n-1}\times 0^{2g} \subset (F_k/F_{k+1})^{2g+n-1}$. By Lemma~\ref{lemma:D'}, $\Theta(\K_{k+2,k+1})$ can be considered as a subset of $\Ker\p'_k$.
Let $\theta\in \Ker\p'_k$ be $(\alpha_1,\ldots,\alpha_{n-1})$ satisfying $\prod [x_i,\alpha_i]=1$. It is the case that $\beta_j=\gamma_j=1$ for all $j$ in the proof of the surjectivity of $\Theta$. Similar to the argument, we obtain $\phi \in \A_{k+2,k}$ such that $\phi(x_i)=(\alpha_i a_i)^{-1} x_i \alpha_i a_i$, $\phi(m_j)=m_j b_j$, $\phi(l_j)=l_j c_j$ for some $a_i, b_j, c_j \in F_{k+1}/F_{k+2}$ and $\Theta(\phi)=\theta$. The $\phi$ is in $\K_{k+2,k+1}$.
	
		To prove the last isomorphism $\frac{\H[2]}{\H(2)} \cong \Ker\p'_k$ in Theorem~\ref{theorem:rank}, we construct an isomorphism $$\bar\Theta^S\colon \Z^{n-1} \times \frac{\K_{3,2}}{\A_{3,2}} \to \Ker \{\p_k|\colon (F/F_2)^{n-1}\times 0^{2g} \to F_2/F_3\}.$$
For $\phi \in \K_{3,2}$, $\alpha_i^\phi$ is uniquely determined in $H_1(\langle x_{i'}, m_j, l_j~|~i'\neq i \rangle)$ by Lemma~\ref{lemma:keys}(1). For an element $((t_i), \phi) \in \Z^{n-1} \times \frac{\K_{3,2}}{\A_{3,2}}$, $\bar\Theta^S$ maps it to $(x_1^{t_1}\alpha_1^\phi,\ldots,x_{n-1}^{t_{n-1}}\alpha_{n-1}^\phi,1,\ldots,1)$. Then we can check similarly that the map is an isomorphism. 
\end{proof}

\end{document}